\setlist{leftmargin=1.5cm}
\newcommand{\R}{\mathbb R}
\newcommand{\N}{\mathbb N}
\newcommand{\X}{\mathbb X}
\newcommand{\E}{\mathbb E}
\newtheorem{tw}{Theorem}[section]
\newtheorem{lm}{Lemma}[section]
\theoremstyle{definition}
\newtheorem*{ex}{Example}
\newtheorem*{rem}{Remark}
\begin{document}
\title{\textbf{Global diffeomorphism theorem applied to the solvability of discrete and continuous boundary value problems}}
\date{ }
\maketitle
\begin{minipage}{0.5 \textwidth}
\begin{center}
\textbf{Micha\l \ Be\l dzi{\' n}ski}\\
beldzinski.michal@outlook.com
\end{center}
\end{minipage}
\begin{minipage}{0.5 \textwidth}
\begin{center}
\textbf{Marek Galewski}\\
marek.galewski@p.lodz.pl
\end{center}
\end{minipage}\\
\begin{center}
Lodz University of Technology\\Institute of Mathematics\\W{\' o}lcza{\' n}ska 215, 90-234 Lodz, Poland
\end{center}
MSC: 39A12, 34B15, 57R50
\begin{abstract}
Using the global diffeomorphism theorem we consider the existence of solutions to the following Dirichlet problem $\ddot{x}\left(t\right)=f\left( t,x\left( t\right) \right)+v(t) $, $x\left( 0\right)=x\left( 1\right) =0$, where $f:\left[ 0,1\right] \times\mathbb{R}\rightarrow\mathbb{R}$ is a jointly continuous function subject to some further growth conditions. Together with a given problem we consider the family of its discretizations and as a result we prove the existence of a non-spurious solution.
\end{abstract}
\pagenumbering{arabic}
\section{Introduction}
In this note we consider in $H_{0}^{1}(0,1)\cap H^{2}(0,1)$ solvability of the following Dirichlet problem 
\begin{equation}
\left\{ 
\begin{array}{l}
\ddot{x}\left( t\right) =f\left( t,x\left( t\right) \right) +v(t),\\ 
x\left( 0\right) =x\left( 1\right) =0,
\end{array}
\right.  \label{continuous_problem}
\end{equation}%
where $f:\left[ 0,1\right] \times \mathbb{R}\rightarrow \mathbb{R}$ is a jointly continuous function and $v\in $ $L^{2}(0,1)$, together with its standard discretization. Convergence of explicitly provided finite dimensional approximations is also undertaken. The idea of solving (\ref{continuous_problem}) is as follows. We investigate the classical solution operator $T$ given (pointwisely) a.e. on $[0,1]$ by 
\begin{equation}
\label{operator_T_definition}
(Tx)(\cdot ):=\ddot{x}(\cdot )-f(\cdot ,x(\cdot )),
\end{equation}
acting from $H_{0}^{1}(0,1)\cap H^{2}(0,1)$ to $L^{2}(0,1)$ for which we can prove that $T$ is a global diffeomorphism using the assumptions which we impose on the nonlinear term $f$ and with the aid of a global diffeomorphism theorem from \cite{idczak}. Then we will have not only the existence of a solution to (\ref{continuous_problem}) but it also would be unique and it would depend continuously on a parameters of the system. We would like to mention that in this work it is the first attempt to prove the existence of solutions to a second order problem by a global diffeomorphism theorem. This theorem has been previously applied to the solvability of first order integro-differential systems, see for example \cite{idczak}, \cite{majewski}. There is also a related research which allows for obtaining the existence of unique solutions to second order ODE contained in \cite{radulescu}.

Together with (\ref{continuous_problem}) we shall consider its discretization which we take
form \cite{kellypeterson}, see also \cite{gaines}. For $a$, $b$ such that $a<b<\infty $, $a\in \mathbb{N}\cup \{0\}$, $b\in \mathbb{N}$ we denote $\mathbb{N}(a,b)=\{a,a+1,...,b-1,b\}$. For a fixed $N\in \mathbb{N}$, $N\geqslant 2$, the non-linear difference equation with Dirichlet boundary conditions is given as follows 
\begin{equation}
\left\{ 
\begin{array}{l}
\Delta ^{2}x(k-1)=\frac{1}{N^{2}}f\left( \frac{k}{N},x(k)\right) +\frac{1}{N^{2}}v\left( \frac{k}{N}\right) , \\ 
x(0)=x(N)=0,
\end{array}
\right.  \label{discretization}
\end{equation}
for $k\in \mathbb{N}(1,N-1)$. Here $\Delta $ is the forward difference operator, i.e. $\Delta x\left(k-1\right) =x\left( k\right) -x\left( k-1\right) $ and we see that $\Delta^{2}x\left( k-1\right) =x\left( k+1\right) -2x\left( k\right) +x\left(k-1\right) $. Assume that both, continuous boundary value problem (\ref{continuous_problem}) and for each fixed $N\in \mathbb{N}$, $N\geqslant 2$, discrete boundary value problem (\ref{discretization}), are uniquely solvable by, respectively $x^{\star }$ and $x_{N}=\left( x_{N}(k)\right) _{k=0}^{N}$.  Then, if $v$ is at least continuous, solutions $x_{N}$ of (\ref{discretization}) converges to solution $x^{\star }$ of (\ref{continuous_problem}) in following sense
\begin{equation}
\lim\limits_{N\to	\infty}\max_{k\in\N(0,N)}\left\vert x^{\star}\left( \tfrac{k}{N}\right) -x_{N}(k)\right\vert =0 \label{convergence_of_discretizations}
\end{equation}
Such solutions to discrete BVPs are called non-spurious. The spurious solutions may diverge or else may converge to anything else but the solution to a given continuous Dirichlet problem, see comments in \cite{gal}. We also refer to \cite{gal} for some examples relating the solvability of both continuous problem and its discrete counterpart. The definition of a non-spurious solution which we employ follows paper \cite{rachunkowa2} and is given as in \cite{gal}. The existence of a non-spurious solutions have been considered by variational methods in \cite{gal} while previously there had been some research in this case addressing mainly problems whose solutions where obtained by the fixed point theorems and the method of lower and upper solutions, \cite{rech1}, \cite{thomsontisdell}.

As in \cite{gal} variational methods are used but now the action functional which is considered differs substantially from the action functional commonly used for variational problems, see \cite{mawhin}, considered by a direct method. When compared with the usage of a direct method, we have to make the following comment. While the classical action functional connected with our problem with the assumptions which we are going to impose, would be coercive, it would not be strictly convex. This means that the solution obtained by using a direct method might not be unique. Instead of convexity we put some restrictions on the derivative of $f $ with respect to $x$. Again this is not an uncommon situation for the term connected with the nonlinearity to be differentiable.

The paper is organized as follows. Firstly we recall some preliminaries which we will need in our main result. Then we prove that our continuous problem (\ref{continuous_problem}) has a unique solution via global diffeomorphism theorem. Next we show unique solvability of discretizations. Finally, we investigate the convergence of solutions of (\ref{discretization}) to the solution of (\ref{continuous_problem}) in a sense described in (\ref{convergence_of_discretizations}). 
\section{Preliminaries}
Let $X$ and $Y$ be a Banach spaces. We say that functional $f:X\rightarrow \mathbb{R}$ is coercive if $\lim\limits_{\Vert x\Vert_X \rightarrow \infty }f(x)=\infty$. A mapping $f:X\to Y$ is said to be diffeomorphism if $f$ is bijective, $C^{1}$ and its inverse $f^{-1}$ is also $C^{1}$. Observe that if function $f$ has a directional derivative along every vector at point $x_{0}$ and $f'(x_0)=f'(x_{0};\cdot)\in \mathcal{L}(X,Y)$ then $f$ is G{\^ a}teaux differentiable at $x_{0}$. Moreover, if the G{\^ a}teaux differential exists on some open boundary containing $x_{0}$ and operator $X\ni x\mapsto f'(x)\in\mathcal{L}(X,Y)$ is continuous at $x_0$, then it is in fact Fr{\' e}chet-derivative of $f$ at point $x_{0}$. As a consequence the existence of G{\^ a}teaux differential of $f$ and its continuity at every point of $X$ implies that $f$ is $C^{1}$.

Let us present main tools of this paper. We first consider finite-dimensional case.
\begin{tw}[Hadamard, \cite{jabri}]
\label{theorem_Hadamard} Let $X$ and $Y$ be a finite-dimensional Hilbert spaces. Assume that $F:X\rightarrow Y$ is $C^{1}$ mapping satisfying following conditions
\begin{itemize}
\item $F^{\prime}(x)$ is invertible at every point $x\in X$,
\item functional $x\mapsto \Vert F(x)\Vert _{Y}$ is coercive.
\end{itemize}
Then $F$ is diffeomorphism.
\end{tw}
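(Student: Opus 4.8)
\section*{Proof proposal}

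The plan is to combine the inverse function theorem with a properness argument and a connectedness/covering--space argument. First I would fix orthonormal bases so as to identify $X$ with $\R^{n}$ and $Y$ with $\R^{m}$; since $F^{\prime}(x)\in\mathcal{L}(X,Y)$ is invertible we must have $n=m$, and both spaces are connected and simply connected. Because $F$ is $C^{1}$ with $F^{\prime}(x)$ invertible at each point, the inverse function theorem makes $F$ a local diffeomorphism everywhere: every $x\in X$ has an open neighborhood mapped diffeomorphically by $F$ onto an open neighborhood of $F(x)$. In particular $F$ is an open map, so $F(X)$ is open in $Y$.

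Next I would show that $F$ is proper, i.e. $F^{-1}(K)$ is compact whenever $K\subseteq Y$ is compact. Such a $K$ is bounded, say $\Vert y\Vert_{Y}\le M$ on $K$, so $F^{-1}(K)\subseteq\{x:\Vert F(x)\Vert_{Y}\le M\}$, and this last set is bounded by coercivity of $x\mapsto\Vert F(x)\Vert_{Y}$; it is also closed since $F$ is continuous and $K$ is closed, hence compact in the finite--dimensional space $X$. A proper continuous map between metric spaces is closed (if $C$ is closed and $y\in\overline{F(C)}$, pick $x_{k}\in C$ with $F(x_{k})\to y$; then the $x_{k}$ lie in the compact set $F^{-1}(\{y\}\cup\{F(x_{k})\})$, so a subsequence converges to some $x\in C$ with $F(x)=y$). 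Thus $F(X)$ is closed; being also open and nonempty, and $Y$ connected, $F(X)=Y$, so $F$ is surjective.

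Then I would upgrade ``proper local diffeomorphism'' to ``covering map''. Fix $y_{0}\in Y$; the fibre $F^{-1}(y_{0})$ is discrete by local injectivity and compact by properness, hence a finite set $\{x_{1},\dots,x_{p}\}$ with $p\ge1$. Choose pairwise disjoint open neighborhoods $U_{i}\ni x_{i}$, each mapped diffeomorphically by $F$, and a connected open $V\ni y_{0}$ contained in $\bigcap_{i}F(U_{i})$. After shrinking $V$ one gets $F^{-1}(V)=\bigsqcup_{i}W_{i}$ with $W_{i}=U_{i}\cap F^{-1}(V)$ mapped diffeomorphically onto $V$: otherwise there would exist $y_{k}\to y_{0}$ with preimages $z_{k}\notin\bigcup_{i}U_{i}$, and by properness a subsequence of $(z_{k})$ would converge to some $z\in F^{-1}(y_{0})$, contradicting $z\notin\bigcup_{i}U_{i}$. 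Hence $F$ is a smooth covering map, and its number of sheets $p$ is locally constant, so constant on the connected space $Y$. Since $X$ is simply connected and $Y\cong\R^{n}$ is simply connected, $p$ equals the index of $F_{\#}\pi_{1}(X)$ in $\pi_{1}(Y)$, which is $1$; therefore $p=1$ and $F$ is injective. A bijective local diffeomorphism is a diffeomorphism, which finishes the proof.

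I expect the only delicate point to be the passage from a proper local diffeomorphism to a covering map --- the ``no preimages escape'' step above --- everything else being a routine use of the inverse function theorem, finite--dimensionality, and connectedness. As an alternative to the covering--space conclusion, injectivity can be obtained by a direct path--lifting argument: if $F(a)=F(b)$, join $F(a)$ to itself by the trivial loop, lift it starting from $a$ and from $b$ using the local sections, and invoke uniqueness of lifts together with simple connectedness of $Y$ to conclude $a=b$.
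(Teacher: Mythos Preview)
Your argument is correct and is one of the standard proofs of Hadamard's theorem: local diffeomorphism from the inverse function theorem, properness from coercivity plus finite dimension, surjectivity from open/closed image in a connected target, and injectivity from the fact that a proper local diffeomorphism onto a simply connected base is a one-sheeted covering. One small redundancy: in the covering-space step you do not need $X$ simply connected, only $X$ path-connected and $Y$ simply connected; since $\pi_{1}(Y)=0$, the index $[\pi_{1}(Y):F_{\#}\pi_{1}(X)]$ is automatically $1$.

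There is, however, nothing to compare against in the paper. Theorem~\ref{theorem_Hadamard} is stated in the preliminaries purely as a tool, with the attribution ``Hadamard, \cite{jabri}'', and no proof is given or sketched; the authors simply invoke it later to show that the discrete solution operator $D_{N}$ is a diffeomorphism. So your proposal supplies a full argument where the paper only provides a reference.
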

We say that a $C^{1}$ functional $\varphi :X\rightarrow \mathbb{R}$, where $X $ is a Banach space, satisfies \textbf{the Palais-Smale (PS) condition} if every sequence $(x_{n})_{n\in \mathbb{N}}$ such that $(\varphi(x_{n}))_{n\in \mathbb{N}}$ is bounded and $\varphi ^{\prime }(x_{n})\xrightarrow{X^*}\theta_{X^{\ast }}$, admits a convergent subsequence.
\begin{tw}[Idczak-Skowron-Walczak, \cite{idczak}]
\label{theorem_Idczak} Let $X$ be a real Banach space, $H$ - a real Hilbert space. If $F:X\rightarrow H$ is a $C^{1}$ mapping such that
\begin{itemize}
\item[(d1)] for every $y\in H$ functional $\varphi:X\to\mathbb{R}$ given by $\varphi(x):=\tfrac{1}{2}\|F(x)-y\|_H^2$ satisfies the PS condition,
\item[(d2)] for any $x\in X$ an operator $F^{\prime }(x)$ is invertible and $F^{\prime }(x)[X]=H$,
\end{itemize}
then $F$ is diffeomorphism.
\end{tw}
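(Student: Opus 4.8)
The plan is to show that $F$ is a bijective local $C^{1}$-diffeomorphism; this already gives the conclusion, because a $C^{1}$ bijection that is a local diffeomorphism has a global set-theoretic inverse which must, near each point, coincide with a local $C^{1}$ inverse, hence is itself $C^{1}$. So three things have to be checked: (a) $F$ is a local $C^{1}$-diffeomorphism, (b) $F$ is surjective, (c) $F$ is injective.

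Step (a) is immediate from (d2): each $F^{\prime}(x)\in\mathcal{L}(X,H)$ is a continuous linear bijection of Banach spaces, hence a topological isomorphism by the open mapping theorem, so the inverse function theorem applies at every $x\in X$ and $F$ is a local $C^{1}$-diffeomorphism. For step (b), fix $y\in H$ and work with the functional $\varphi=\varphi_{y}$ from (d1), which is $C^{1}$, nonnegative, and satisfies $\varphi^{\prime}(x)=F^{\prime}(x)^{*}(F(x)-y)\in X^{*}$ by the chain rule. Since $\varphi$ is bounded below, Ekeland's variational principle yields a minimizing sequence $(x_{n})$ with $\varphi(x_{n})\to\inf\varphi$ and $\varphi^{\prime}(x_{n})\to\theta_{X^{*}}$; by (d1) a subsequence converges to some $\bar{x}$ with $\varphi^{\prime}(\bar{x})=\theta_{X^{*}}$. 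Now $F^{\prime}(\bar{x})$ is onto, so its adjoint $F^{\prime}(\bar{x})^{*}$ is injective, whence $\varphi^{\prime}(\bar{x})=\theta_{X^{*}}$ forces $F(\bar{x})=y$. Thus $F$ is surjective; moreover $\min\varphi_{y}=0$, attained exactly on $F^{-1}(y)$, and by (a) every point of $F^{-1}(y)$ is a strict local minimizer of $\varphi_{y}$.

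For step (c), suppose $F(x_{1})=F(x_{2})=y$ with $x_{1}\neq x_{2}$, and again set $\varphi=\varphi_{y}$. Using that $F$ is a local homeomorphism at $x_{1}$ (continuity of the local inverse near $y$), one produces $\rho>0$ with $\|x_{2}-x_{1}\|>\rho$ and $\mu:=\inf_{\|x-x_{1}\|=\rho}\varphi(x)>0$: otherwise a sequence on that sphere with $\varphi\to 0$ would have $F$-images tending to $y$, hence would tend to $x_{1}$, contradicting $\|x-x_{1}\|=\rho$. Together with $\varphi(x_{1})=\varphi(x_{2})=0<\mu$ and the PS condition from (d1), this is precisely the mountain pass geometry, so the mountain pass theorem furnishes a critical point $x_{3}$ with $\varphi(x_{3})\geq\mu>0$. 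But $\varphi^{\prime}(x_{3})=\theta_{X^{*}}$ and injectivity of $F^{\prime}(x_{3})^{*}$ give $F(x_{3})=y$, i.e. $\varphi(x_{3})=0$, a contradiction. Hence $F$ is injective, and combining (a)--(c), $F$ is a global diffeomorphism.

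I expect the injectivity step (c) to be the main obstacle: one must set up the mountain pass geometry honestly in the infinite-dimensional space, the delicate point being the strict positivity of the separating level $\mu$ on the sphere around $x_{1}$, which is exactly where the local-homeomorphism property of $F$ (and not merely its differentiability) enters; after that, the PS condition feeds the mountain pass theorem in a routine way. The surjectivity step is comparatively soft, the key observation there being that $F^{\prime}(x)^{*}$ is injective, so that the critical points of $\varphi_{y}$ are exactly the points of $F^{-1}(y)$.
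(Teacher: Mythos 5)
Your argument is correct, and since the paper only quotes this theorem from \cite{idczak} without proving it, the natural benchmark is the original proof there (following Katriel's mountain-pass approach to global inversion), which is exactly what you reproduce: local invertibility via the inverse function theorem from (d2), surjectivity by minimizing $\varphi_y$ with Ekeland's principle plus the PS condition and the injectivity of $F'(\bar x)^{*}$, and injectivity by the mountain pass theorem applied to $\varphi_y$ with the separating level on a small sphere obtained from the local homeomorphism property. In short, you have correctly supplied essentially the same proof as the cited source, so there is nothing to add.
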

\section{Solvability of the continuous boundary value problem}
We start this section with some remarks on the space in which we consider our problem. Firstly, denote $L^2:=L^2(0,1)$. The space $H^{1}$ consists of those absolute continuous functions, defined on closed interval $[0,1]$, whose weak derivative is integrable with square. $H^{2}$ denotes space of those functions form $H^1$ for which $\dot{x}\in H^1$. We define $H_{0}^{1}:=\{x\in H^{1}:x(0)=x(1)=0\}$. Let $x\in H_{0}^{1}$. The following inequalities hold, see \cite{haase},
\begin{equation*}
\begin{array}{cc}
\Vert x\Vert _{\infty }\leqslant \Vert \dot{x}\Vert _{L^{2}},\hspace{2.5cm} & \Vert x\Vert
_{L^{2}}\leqslant \frac{1}{\pi }\Vert \dot{x}\Vert _{L^{2}}.
\end{array}
\end{equation*}
Hence we can consider the spaces mentioned above with the following norms\\
\centerline{$\Vert x\Vert _{H^{2}}:=\Vert x\Vert _{L^{2}}+\Vert \dot{x}\Vert_{L^{2}}+\Vert \ddot{x}\Vert _{L^{2}}$, \ \ \ \ $\Vert x\Vert _{H^{1}}:=\Vert x\Vert_{L^{2}}+\Vert \dot{x}\Vert _{L^{2}}$ \ \ and \ \ $\Vert x\Vert _{H_{0}^{1}}:=\Vert\dot{x}\Vert _{L^{2}}$.}

Finally we define space $\mathbb{X}:=H^{2}\cap H_{0}^{1}$ equipped with a norm $\Vert x\Vert _{\mathbb{X}}:=\Vert \ddot{x}\Vert _{L^{2}}$.
\begin{lm}
$\mathbb{X}$ is a closed subspace of $H^{2}$ and norms $\Vert \cdot \Vert _{\mathbb{X}}$ and $\Vert \cdot \Vert _{H^{2}}$ are equivalent on $\X$. Moreover, for every $x\in\X$, the following inequalities hold 
\begin{equation}
\Vert x\Vert _{\infty }\leqslant \Vert \dot{x}\Vert _{L^{2}},\hspace{2.5cm}
\Vert x\Vert _{L^{2}}\leqslant \tfrac{1}{\pi }\Vert \dot{x}\Vert
_{L^{2}}\leqslant \tfrac{1}{\pi ^{2}}\Vert \ddot{x}\Vert _{L^{2}}. \label{ineq_from_assert}
\end{equation}
\end{lm}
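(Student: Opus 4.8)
The plan is to establish the three assertions in order: the Poincaré-type inequalities \eqref{ineq_from_assert}, then the equivalence of norms, and finally closedness of $\X$ in $H^2$. I would start with the inequalities, since they are the computational core and the norm equivalence follows from them almost immediately. For $x\in\X$ we have $x\in H_0^1$, so the two inequalities $\Vert x\Vert_\infty\leqslant\Vert\dot x\Vert_{L^2}$ and $\Vert x\Vert_{L^2}\leqslant\tfrac1\pi\Vert\dot x\Vert_{L^2}$ are exactly the ones recalled from \cite{haase} just before the lemma. The only genuinely new point is the estimate $\Vert\dot x\Vert_{L^2}\leqslant\tfrac1\pi\Vert\ddot x\Vert_{L^2}$. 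The key observation here is that although $x(0)=x(1)=0$, it is \emph{not} true in general that $\dot x$ vanishes at the endpoints; however, one can still apply the Poincaré inequality to $\dot x$ by a different route. I would argue as follows: for $x\in\X$, integration by parts gives $\Vert\dot x\Vert_{L^2}^2=\int_0^1\dot x(t)^2\,dt=[x\dot x]_0^1-\int_0^1 x(t)\ddot x(t)\,dt=-\int_0^1 x\ddot x\,dt$, using $x(0)=x(1)=0$. By Cauchy–Schwarz this is at most $\Vert x\Vert_{L^2}\Vert\ddot x\Vert_{L^2}\leqslant\tfrac1\pi\Vert\dot x\Vert_{L^2}\Vert\ddot x\Vert_{L^2}$, and dividing by $\Vert\dot x\Vert_{L^2}$ (the case $\dot x=0$ being trivial since then $x\equiv0$) yields $\Vert\dot x\Vert_{L^2}\leqslant\tfrac1\pi\Vert\ddot x\Vert_{L^2}$. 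This is the step I expect to be the main obstacle — not because it is hard, but because one must resist the temptation to apply the $H_0^1$ inequality directly to $\dot x$, and instead exploit the boundary conditions on $x$ through integration by parts.

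Once \eqref{ineq_from_assert} is in hand, norm equivalence is straightforward. Clearly $\Vert x\Vert_\X=\Vert\ddot x\Vert_{L^2}\leqslant\Vert x\Vert_{H^2}$ for every $x\in\X$. Conversely, chaining the inequalities in \eqref{ineq_from_assert} gives $\Vert x\Vert_{L^2}\leqslant\tfrac1{\pi^2}\Vert\ddot x\Vert_{L^2}$ and $\Vert\dot x\Vert_{L^2}\leqslant\tfrac1\pi\Vert\ddot x\Vert_{L^2}$, so $\Vert x\Vert_{H^2}=\Vert x\Vert_{L^2}+\Vert\dot x\Vert_{L^2}+\Vert\ddot x\Vert_{L^2}\leqslant(\tfrac1{\pi^2}+\tfrac1\pi+1)\Vert x\Vert_\X$. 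Hence the two norms are equivalent on $\X$ with explicit constants.

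For closedness, I would take a sequence $(x_n)\subset\X$ converging to some $x$ in $H^2$ and show $x\in\X$. Since $H^2\hookrightarrow C^1([0,1])$ (or at least $H^2\hookrightarrow C([0,1])$ via $H^1\hookrightarrow C([0,1])$), convergence in $H^2$ implies uniform convergence, so $x_n(0)\to x(0)$ and $x_n(N)$—rather, $x_n(1)\to x(1)$; as each $x_n(0)=x_n(1)=0$, we get $x(0)=x(1)=0$, i.e.\ $x\in H_0^1$, and therefore $x\in H^2\cap H_0^1=\X$. This shows $\X$ is a closed subspace of $H^2$, which combined with the norm equivalence makes $(\X,\Vert\cdot\Vert_\X)$ a Banach space in its own right — the setting needed for the subsequent application of Theorem~\ref{theorem_Idczak}.
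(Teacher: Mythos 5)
Your proposal is correct and follows essentially the same route as the paper: closedness via the embedding of $H^2$ (indeed $H^1$) into $C[0,1]$ to pass the boundary conditions to the limit, and the inequality $\Vert\dot x\Vert_{L^2}\leqslant\tfrac1\pi\Vert\ddot x\Vert_{L^2}$ obtained by integrating $\int_0^1\dot x\dot x\,dt$ by parts (boundary terms vanishing since $x(0)=x(1)=0$) combined with Cauchy--Schwarz and the Poincar\'e inequality. The only differences are cosmetic: you divide by $\Vert\dot x\Vert_{L^2}$ where the paper divides by $\Vert x\Vert_{L^2}$, and you spell out the explicit equivalence constants, which the paper leaves implicit.
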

\begin{proof}
Suppose $(x_{n})_{n\in \mathbb{N}}\subset \mathbb{X}$ and $x_{n}\xrightarrow{H^2}x^{\star }$. Then, by Lemma 1.2 from \cite{mawhin}, we have $x_{n}\xrightarrow{C[0,1]}x^{\star }$. That implies the pointwise convergence of $(x_{n})_{n\in \mathbb{N}}$ on $[0,1]$. Hence $x^{\star}(0)=\lim\limits_{n\rightarrow \infty }x_{n}(0)=0$. For the same reason $x^{\star }(1)=0$.

Let $x\in \mathbb{X}$. Then $x$ and $\dot{x}$ are absolutely continuous and $x(0)=x(1)=0$. Integrating by parts we see that 
\begin{equation*}
\Vert x\Vert _{L^{2}}\Vert \dot{x}\Vert _{L^{2}}\leqslant \tfrac{1}{\pi }\Vert \dot{x}\Vert _{L^{2}}^{2}=\tfrac{1}{\pi }\int_{0}^{1}\dot{x}(t)\dot{x}(t)dt=-\tfrac{1}{\pi }\int_{0}^{1}x(t)\ddot{x}(t)dt\leqslant \tfrac{1}{\pi }\Vert x\Vert _{L^{2}}\Vert \ddot{x}\Vert _{L^{2}}.
\end{equation*}
As a result we have (\ref{ineq_from_assert}). Thus both norms are equivalent
on $\mathbb{X}$.
\end{proof}
The solutions to (\ref{continuous_problem}) will be investigated in the space $\mathbb{X}$. Such a solution we call classical. In case $v$ is at least continuous then the solution belongs to $C^{2}[0,1]$. The following theorem shows when the problem (\ref{continuous_problem}) has exactly one solution. To make notation shorter, let us put $\inf\limits_{\Omega }f:=\inf\limits_{\omega\in \Omega }f(\omega)$.
\begin{tw}
\label{theorem_solvability_of_continuous} Let $v\in L^2$. Suppose that $f:[0,1]\times\mathbb{R}\to\mathbb{R}$ satisfies the following assumptions
\begin{itemize}
\item[\textbf{C(c)}:] $f$ is continuous on $[0,1]\times\R$ and it has continuous partial derivative $f_{x}$ with respect to the second coordinate on $[0,1]\times\R$, 
\item[\textbf{C($f$)}:] there exist a positive constants $A$ and $B$, $A<\pi^2$, such that for every $t\in[0,1]$ there is $|f(t,x)|\leqslant A|x|+B$,
\item[\textbf{C($f_x$)}:] $\inf\limits_{[0,1]\times\mathbb{R}} f_{x}> -\pi^2$.
\end{itemize}
Then problem (\ref{continuous_problem}) has a unique solution.
\end{tw}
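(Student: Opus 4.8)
The plan is to verify the hypotheses of Theorem~\ref{theorem_Idczak} for the solution operator $T\colon\X\to L^2$ from (\ref{operator_T_definition}), taking $X=\X$ and $H=L^2$. Since (\ref{continuous_problem}) is equivalent to $Tx=v$, once $T$ is shown to be a diffeomorphism we obtain not only existence and uniqueness of a solution $x^{\star}=T^{-1}(v)\in\X$ but also its continuous dependence on the data. First I would check that $T$ is well defined and of class $C^1$. By (\ref{ineq_from_assert}) the embedding $\X\hookrightarrow C[0,1]$ is continuous (in fact compact), so by \textbf{C(c)} the superposition operators $x\mapsto f(\cdot,x(\cdot))$ and $x\mapsto f_x(\cdot,x(\cdot))$ are continuous from $\X$ into $C[0,1]\subset L^2$; in particular $Tx=\ddot x-f(\cdot,x(\cdot))\in L^2$. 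A mean value theorem argument combined with dominated convergence gives the directional derivative $T'(x)h=\ddot h-f_x(\cdot,x(\cdot))\,h$, which is linear and bounded in $h$ because $\|f_x(\cdot,x(\cdot))h\|_{L^2}\le\pi^{-2}\|f_x(\cdot,x(\cdot))\|_{\infty}\|h\|_{\X}$; and $x\mapsto T'(x)$ is continuous into $\mathcal L(\X,L^2)$ since $\|(f_x(\cdot,x_1(\cdot))-f_x(\cdot,x_2(\cdot)))h\|_{L^2}\le\pi^{-2}\|f_x(\cdot,x_1(\cdot))-f_x(\cdot,x_2(\cdot))\|_{\infty}\|h\|_{\X}$. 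By the observation recalled in the Preliminaries this shows $T\in C^1(\X,L^2)$.

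For condition (d2) fix $x\in\X$ and put $a:=f_x(\cdot,x(\cdot))\in C[0,1]$; by \textbf{C($f_x$)} there is $\delta>0$ with $a\ge-\pi^2+\delta$ on $[0,1]$. Given $g\in L^2$, the equation $T'(x)h=g$ is the linear Dirichlet problem $\ddot h-ah=g$, $h(0)=h(1)=0$, which I would solve by the Lax--Milgram lemma applied to the bilinear form $\mathcal B(h,w)=\int_0^1(\dot h\dot w+a\,hw)\,dt$ on $H_0^1$. It is bounded, and by Poincaré's inequality $\pi^2\|h\|_{L^2}^2\le\|\dot h\|_{L^2}^2$ from (\ref{ineq_from_assert}) it is coercive: $\mathcal B(h,h)\ge\|\dot h\|_{L^2}^2-(\pi^2-\delta)\|h\|_{L^2}^2\ge\delta\pi^{-2}\|\dot h\|_{L^2}^2$. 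The resulting weak solution lies in $H_0^1$, and since $\ddot h=ah+g\in L^2$ it belongs to $\X$; coercivity also gives uniqueness. Hence $T'(x)\colon\X\to L^2$ is a bounded bijection, thus a linear homeomorphism with $T'(x)[\X]=L^2$, so (d2) holds.

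The crucial point is (d1): for $y\in L^2$ the functional $\varphi(x)=\tfrac12\|T(x)-y\|_{L^2}^2$ satisfies the PS condition. Let $(x_n)\subset\X$ be such that $\varphi(x_n)$ is bounded and $\varphi'(x_n)\to\theta_{\X^*}$. Boundedness of $\varphi(x_n)$ gives $\|T(x_n)\|_{L^2}\le M$ for some $M$, and then \textbf{C($f$)} together with (\ref{ineq_from_assert}) yields
\[
\|\ddot x_n\|_{L^2}\le\|T(x_n)\|_{L^2}+\|f(\cdot,x_n(\cdot))\|_{L^2}\le M+A\pi^{-2}\|\ddot x_n\|_{L^2}+B,
\]
so $(1-A\pi^{-2})\|x_n\|_{\X}\le M+B$; since $A<\pi^2$ the sequence is bounded in $\X$. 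Passing to a subsequence, $x_n\rightharpoonup x$ in $\X$ and, by compactness of $\X\hookrightarrow C[0,1]$, $x_n\to x$ uniformly, whence $f(\cdot,x_n(\cdot))\to f(\cdot,x(\cdot))$ in $L^2$, the functions $f_x(\cdot,x_n(\cdot))$ are uniformly bounded, and therefore $f_x(\cdot,x_n(\cdot))(x_n-x)\to0$ in $L^2$. Testing $\varphi'(x_n)$ against the bounded sequence $h_n:=x_n-x$,
\[
\varphi'(x_n)[h_n]=\skal{T(x_n)-y}{\ddot x_n-\ddot x}-\skal{T(x_n)-y}{f_x(\cdot,x_n(\cdot))(x_n-x)}\longrightarrow0,
\]
and since $T(x_n)-y$ stays bounded in $L^2$ the second pairing vanishes in the limit; expanding $T(x_n)-y=\ddot x_n-(f(\cdot,x_n(\cdot))+y)$ in the first pairing and using that $f(\cdot,x_n(\cdot))+y$ converges strongly against the weakly null sequence $\ddot x_n-\ddot x$ leaves $\skal{\ddot x_n}{\ddot x_n-\ddot x}\to0$; subtracting $\skal{\ddot x}{\ddot x_n-\ddot x}\to0$ gives $\|\ddot x_n-\ddot x\|_{L^2}^2\to0$, i.e. $x_n\to x$ in $\X$. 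This establishes (d1).

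With (d1) and (d2) in hand, Theorem~\ref{theorem_Idczak} shows $T$ is a diffeomorphism of $\X$ onto $L^2$; in particular $T$ is bijective, so $Tx=v$ has exactly one solution in $\X$, which is the unique solution of (\ref{continuous_problem}). I expect the verification of the PS condition to be the main obstacle — concretely, upgrading the weak convergence of $(\ddot x_n)$ to strong convergence in $L^2$; the remaining ingredients are the a priori bound coming from \textbf{C($f$)} with $A<\pi^2$ and the standard Lax--Milgram argument enabled by \textbf{C($f_x$)}.
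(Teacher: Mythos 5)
Your proposal is correct and follows the same master plan as the paper: treat $T$ from (\ref{operator_T_definition}) as a map $\X\to L^2$, verify conditions (d1) and (d2) of Theorem~\ref{theorem_Idczak}, and read off existence and uniqueness from $Tx=v$. The differences are in how the two conditions are established. For (d2) the paper minimizes the Euler functional $J(\xi)=\tfrac12\int|\dot\xi|^2+\tfrac12\int f_x(t,x(t))|\xi|^2+\int y\xi$ on $H_0^1$, proving coercivity and strict convexity from \textbf{C($f_x$)} and then upgrading the critical point to an $\X$-solution via the du Bois--Reymond lemma; you instead apply Lax--Milgram to the bilinear form $\mathcal B(h,w)=\int(\dot h\dot w+f_x(\cdot,x(\cdot))hw)$, whose coercivity follows from the same Poincar\'e constant $\pi^2$ --- the two arguments are interchangeable here, yours being slightly more economical, the paper's avoiding any appeal to Lax--Milgram and staying within the variational toolkit it already uses. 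For (d1) the paper first proves a separate coercivity lemma for $\varphi$ and then, after extracting a weakly convergent subsequence, expands $\varphi'(x_0)(x_0-x_n)-\varphi'(x_n)(x_0-x_n)$ into $\|\ddot x_0-\ddot x_n\|_{L^2}^2$ plus six remainder terms $\psi_1,\dots,\psi_6$, each shown to vanish; your argument gets the $\X$-bound directly from boundedness of $\varphi(x_n)$ (the same estimate as the paper's coercivity proof) and then tests $\varphi'(x_n)$ only against $x_n-x$, using the strong convergence of $f(\cdot,x_n(\cdot))+y$ in $L^2$ paired against the weakly null sequence $\ddot x_n-\ddot x$ to isolate $\|\ddot x_n-\ddot x\|_{L^2}^2\to0$. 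This is a leaner bookkeeping of the same cancellations and is sound, provided you note (as you implicitly do) that $x\mapsto\ddot x$ is bounded linear on $\X$ so weak $\X$-convergence gives $\ddot x_n\rightharpoonup\ddot x$ in $L^2$, and that boundedness of $(x_n)$ in $C[0,1]$ makes $f_x(\cdot,x_n(\cdot))$ uniformly bounded. In short: same theorem, same two pillars, but a Lax--Milgram route to invertibility and a shorter weak--strong pairing for the PS condition.
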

In order to prove Theorem \ref{theorem_solvability_of_continuous} it is enough to show that $T$ given by (\ref{operator_T_definition}) is diffeomorphism. This in turn will be done with the aid of Theorem \ref{theorem_Idczak}. In order to apply this theorem we fix $y\in L^{2}$ and we define functional $\varphi :\mathbb{X}\rightarrow \mathbb{R}$ by 
\begin{equation*}
\varphi (x):=\tfrac{1}{2}\Vert Tx-y\Vert _{L^{2}}^{2}=\tfrac{1}{2}\int_{0}^{1}|\ddot{x}(t)-f(t,x(t))-y(t)|^{2}dt.
\end{equation*}
We will verify that conditions (d1) and (d2) of Theorem \ref{theorem_Idczak} hold provided \textbf{\textit{C(c)}}, \textbf{\textit{C($f$)}} and \textbf{\textit{C($f_{x} $)}} are imposed.
\subsection{Proof of the main result}
We begin with showing that condition (d1) is satisfied. Firstly we need to prove that $T$ is a $C^{1}$ mapping. Then in the sequence of lemmas we will investigate the properties of functional $\varphi $. We start with showing that it is $C^{1}$, then that it is coercive and finally that it satisfies the PS condition.
\begin{lm}
\label{lemma_T'} Assume that \textbf{\textit{C(c)}} is satisfied. Then operator $T:\mathbb{X}\rightarrow L^{2}$ is a $C^{1}$ mapping with a derivative at any fixed $x\in \mathbb{X}$ given by the following formula (for a.e. $t\in \left[ 0,1\right] $) 
\begin{equation*}
T^{\prime }(x;\psi )(t)=\ddot{\psi}(t)-f_{x}(t,x(t))\psi (t)
\end{equation*}
for all $\psi \in \mathbb{X}$.
\end{lm}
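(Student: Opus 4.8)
The plan is to establish that $T$ is $C^1$ by verifying that it has a Gâteaux differential at every point of $\mathbb{X}$, that this differential is given by the stated formula, and that the resulting map $x \mapsto T'(x)$ is continuous from $\mathbb{X}$ into $\mathcal{L}(\mathbb{X}, L^2)$; by the remark in the Preliminaries, continuity of the Gâteaux differential at every point upgrades it to a Fréchet derivative and yields $T \in C^1$. The linear part of $T$, namely $x \mapsto \ddot{x}$, is bounded and linear from $\mathbb{X}$ to $L^2$ (indeed it is an isometry by definition of $\Vert \cdot \Vert_{\mathbb{X}}$), hence smooth with derivative $\psi \mapsto \ddot{\psi}$, so the whole issue reduces to the Nemytskii-type operator $N(x)(t) := f(t, x(t))$ acting from $\mathbb{X}$ to $L^2$.

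First I would compute the directional derivative of $N$. Fix $x, \psi \in \mathbb{X}$ and $t \in [0,1]$; by the mean value theorem applied to $s \mapsto f(t, x(t) + s\psi(t))$ and assumption \textbf{\textit{C(c)}}, we have $\tfrac{1}{s}\bigl(f(t, x(t)+s\psi(t)) - f(t,x(t))\bigr) = f_x(t, x(t) + \theta_s(t) s\psi(t))\,\psi(t)$ for some $\theta_s(t) \in (0,1)$. As $s \to 0$ this converges pointwise to $f_x(t, x(t))\psi(t)$. To pass to the limit in $L^2$ I would note that every $x \in \mathbb{X}$ embeds continuously into $C[0,1]$ (via $\Vert x \Vert_\infty \leqslant \Vert \dot{x} \Vert_{L^2} \leqslant \tfrac{1}{\pi}\Vert \ddot x \Vert_{L^2}$ from the preceding lemma), so the arguments $x(t) + s\psi(t)\theta_s(t)$ all lie in a fixed compact interval $K \subset \mathbb{R}$ for $|s| \leqslant 1$; since $f_x$ is continuous, hence bounded and uniformly continuous on $[0,1]\times K$, the integrands are dominated by $\bigl(\sup_{[0,1]\times K}|f_x|\bigr)^2 \Vert\psi\Vert_\infty^2$ and converge pointwise, so dominated convergence gives $\tfrac{1}{s}(N(x+s\psi) - N(x)) \to f_x(\cdot, x(\cdot))\psi(\cdot)$ in $L^2$. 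This shows the directional derivative exists and equals the claimed formula. Linearity and boundedness of $\psi \mapsto f_x(\cdot,x(\cdot))\psi$ as a map $\mathbb{X} \to L^2$ are immediate from $\Vert f_x(\cdot,x(\cdot))\psi\Vert_{L^2} \leqslant \bigl(\sup_{[0,1]\times[-\Vert x\Vert_\infty,\Vert x\Vert_\infty]}|f_x|\bigr)\Vert \psi\Vert_\infty \leqslant C(x)\Vert \psi \Vert_{\mathbb{X}}$, so $T$ is Gâteaux differentiable with the stated differential.

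Finally I would verify continuity of $x \mapsto T'(x)$. Given $x_n \to x$ in $\mathbb{X}$, we have $x_n \to x$ in $C[0,1]$, so all the $x_n$ and $x$ take values in a common compact interval $K$; then
\[
\Vert T'(x_n) - T'(x)\Vert_{\mathcal{L}(\mathbb{X},L^2)} = \sup_{\Vert\psi\Vert_{\mathbb{X}}\leqslant 1}\Vert (f_x(\cdot,x_n(\cdot)) - f_x(\cdot,x(\cdot)))\psi\Vert_{L^2} \leqslant \tfrac{1}{\pi^2}\sup_{t\in[0,1]}|f_x(t,x_n(t)) - f_x(t,x(t))|,
\]
using $\Vert\psi\Vert_\infty \leqslant \tfrac{1}{\pi^2}\Vert\ddot\psi\Vert_{L^2} \leqslant \tfrac{1}{\pi^2}$. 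By uniform continuity of $f_x$ on the compact set $[0,1]\times K$ together with the uniform convergence $x_n \to x$, the right-hand side tends to $0$. Hence $x \mapsto T'(x)$ is continuous and $T \in C^1(\mathbb{X}, L^2)$.

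The main obstacle is purely a matter of care rather than depth: one must justify interchanging the limit in $s$ with the $L^2$-integration, and the clean way to do so — rather than invoking an abstract Nemytskii differentiability theorem, which would require growth control on $f_x$ itself — is to exploit the compact embedding $\mathbb{X} \hookrightarrow C[0,1]$ so that all function values stay in a fixed compact set, reducing everything to the (uniform) continuity of $f_x$ on a compact rectangle. Everything else is routine.
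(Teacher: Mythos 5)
Your proposal is correct and follows essentially the same route as the paper: compute the G\^ateaux differential, check it is a bounded linear map into $L^{2}$, show $x\mapsto T'(x)$ is continuous using the embedding of $\mathbb{X}$ into $C[0,1]$, and invoke the G\^ateaux-to-Fr\'echet upgrade from the Preliminaries — your only additions are the (welcome) explicit mean value theorem plus dominated convergence justification of the directional derivative, and uniform continuity of $f_x$ on a compact rectangle in place of the paper's Lebesgue dominated convergence step. One small slip: the inequality $\Vert\psi\Vert_\infty\leqslant\tfrac{1}{\pi^{2}}\Vert\ddot\psi\Vert_{L^{2}}$ is false (test $\psi(t)=\sin(\pi t)$); the available chain gives $\Vert\psi\Vert_\infty\leqslant\Vert\dot\psi\Vert_{L^{2}}\leqslant\tfrac{1}{\pi}\Vert\ddot\psi\Vert_{L^{2}}$, which serves the same purpose and leaves your argument intact.
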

\begin{proof}
Let $x,\psi \in \mathbb{X}$ be fixed. Then (for a.e. $t\in \left[ 0,1\right]$) by the assumptions on $f$ we get 
\begin{align*}
T^{\prime }(x;\psi )(t)& =\left. \dfrac{d}{d\tau }T(x+\tau \psi )\right\vert_{\tau =0}=\left. \dfrac{d}{d\tau }\ddot{x}(t)+\tau\ddot{\psi}(t)-f(t,x(t)+\tau \psi (t))\right\vert _{\tau =0} \\
& =\ddot{\psi}(t)-f_{x}(t,x(t))\psi (t).
\end{align*}
We show that $T^{\prime }(x;\cdot )$ is bounded and thus continuous. Because $f_{x}$ and $x$ are continuous then $|f_x(t,x(t))|$ is bounded by some positive constant $M$ (for a.e. $t\in[0,1]$). Then 
\begin{align*}
\Vert T^{\prime }(x;\psi )\Vert _{L^{2}}& =\left( \int_{0}^{1}|\ddot{\psi}(t)-f_{x}(t,x(t))\psi (t)|^{2}dt\right) ^{\frac{1}{2}} \\ 
&\leqslant \left( \int_{0}^{1}|\ddot{\psi}(t)|^{2}dt\right) ^{\frac{1}{2}}+M\left( \int_{0}^{1}|\psi (t)|^{2}dt\right) ^{\frac{1}{2}}\leqslant \left(1+\tfrac{M}{\pi ^{2}}\right) \Vert \psi \Vert _{\mathbb{X}}.
\end{align*}%
Hence $T^{\prime }(x)$ is a G{\^ a}teaux-differential of $T$. The last step is to show continuity of this operator at every $x\in \mathbb{X}$. Fix $x_{0}\in \mathbb{X}$ and let $x_{n}\xrightarrow{\X}x_{0}$, then $x_{n}\xrightarrow{C[0,1]}x_{0}$ and consequently, since $f_{x}$ is continuous, $f_{x}(t,x_{n}(t))\rightarrow f_{x}(t,x_{0}(t))$ (for a.e. $t\in\lbrack 0,1]$). Moreover, $|f_{x}(t,x_{n}(t))|$ is uniformly bounded (for a.e. $t\in[0,1]$), since $(x_{n})_{n\in \mathbb{N}}$ is bounded in $C[0,1]$. Hence, by the Lebesgue Dominated Convergence Theorem we obtain 
\begin{align*}
\Vert T^{\prime }(x_{0} )-T^{\prime }(x_{n})\Vert _{\mathcal{L}\left( \mathbb{X},L^{2}\right) }& =\sup\limits_{\Vert \xi \Vert _{\mathbb{X}}=1}\Vert T^{\prime }(x_{0};\xi )-T^{\prime }(x_{n};\xi )\Vert _{L^{2}} 
\\& \leqslant \sup\limits_{\Vert \xi \Vert _{\mathbb{X}}=1}\Vert \xi \Vert_{\infty }\left(\int_{0}^{1}|f_{x}(t,x_{n}(t))-f_{x}(t,x_{0}(t))|^{2}dt\right) ^{\frac{1}{2}}\rightarrow 0.
\end{align*}
Finally, $T\in C^{1}\left( \mathbb{X},L^{2}\right) $.
\end{proof}
By Lemma \ref{lemma_T'} and by the Chain Rule formula, we obtain the following
\begin{lm}
\label{lemma_varphi_is_C^1} Assume that \textbf{\textit{C(c)}} is satisfied. Then the functional $\varphi :\mathbb{X}\rightarrow \mathbb{R}$ is $C^{1}$ with a derivative given at any fixed $x\in \mathbb{X}$ by the following formula 
\begin{equation*}
\varphi^{\prime }(x;h)=\int_{0}^{1}\left( \ddot{x}(t)-f(t,x(t))-y(t)\right)\left( \ddot{h}(t)-f_{x}(t,x(t))h(t)\right) dt
\end{equation*}
for all $h\in \mathbb{X}$.
\end{lm}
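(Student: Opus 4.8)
The plan is to recognize $\varphi$ as the composition $\varphi = N\circ T$, where $T:\mathbb{X}\to L^{2}$ is the $C^{1}$ operator supplied by Lemma \ref{lemma_T'} and $N:L^{2}\to\mathbb{R}$ is the quadratic functional $N(u):=\tfrac12\Vert u-y\Vert_{L^{2}}^{2}$. Once this is set up, the statement follows from the chain rule for Fr\'echet derivatives together with the $C^{1}$ regularity of $T$ that has already been established; this is exactly the route announced in the sentence preceding the lemma.

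First I would verify that $N$ is of class $C^{1}$ on $L^{2}$ with $N'(u)[w]=\skal{u-y}{w}_{L^{2}}$. This is immediate from the expansion
\[
N(u+\tau w)=\tfrac12\Vert u-y\Vert_{L^{2}}^{2}+\tau\,\skal{u-y}{w}_{L^{2}}+\tfrac{\tau^{2}}{2}\Vert w\Vert_{L^{2}}^{2},
\]
which shows that the directional derivatives of $N$ at $u$ exist and coincide with the bounded linear functional $w\mapsto\skal{u-y}{w}_{L^{2}}$, while the identity $\Vert N'(u_{1})-N'(u_{2})\Vert_{(L^{2})^{\ast}}=\Vert u_{1}-u_{2}\Vert_{L^{2}}$ gives continuity (indeed Lipschitz continuity) of $u\mapsto N'(u)$. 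Hence $N\in C^{1}(L^{2},\mathbb{R})$.

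Then, since $T\in C^{1}(\mathbb{X},L^{2})$ by Lemma \ref{lemma_T'} and $N\in C^{1}(L^{2},\mathbb{R})$ by the previous step, the chain rule yields that $\varphi=N\circ T$ is $C^{1}$ on $\mathbb{X}$ with
\[
\varphi'(x)[h]=N'(Tx)\big[T'(x)h\big]=\skal{Tx-y}{T'(x)h}_{L^{2}}.
\]
Substituting $(Tx)(t)=\ddot x(t)-f(t,x(t))$ and $T'(x;h)(t)=\ddot h(t)-f_{x}(t,x(t))h(t)$ from Lemma \ref{lemma_T'} gives precisely the asserted formula for $\varphi'(x;h)$.

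There is essentially no genuine obstacle here: the only point needing a line of verification beyond citing the chain rule is the $C^{1}$-ness of the norm-squared functional on the Hilbert space $L^{2}$, which is standard. Should one prefer to avoid the abstract chain rule, one can argue directly instead: differentiate $\tfrac12\int_{0}^{1}|\ddot x(t)-f(t,x(t))-y(t)|^{2}\,dt$ in $\tau$ under the integral sign along $x+\tau h$ (legitimate because, for fixed $x,h\in\mathbb{X}$, the integrand is differentiable in $\tau$ with a dominating function obtained from the continuity of $f$, $f_{x}$ and the boundedness of $x,h$ on $[0,1]$) to obtain the stated Gâteaux differential, and then deduce continuity of $x\mapsto\varphi'(x)$ in $\mathcal{L}(\mathbb{X},\mathbb{R})$ from the estimate
\[
|\varphi'(x_{1})[h]-\varphi'(x_{2})[h]|\leqslant\Vert Tx_{1}-Tx_{2}\Vert_{L^{2}}\Vert T'(x_{1})h\Vert_{L^{2}}+\Vert Tx_{2}-y\Vert_{L^{2}}\Vert(T'(x_{1})-T'(x_{2}))h\Vert_{L^{2}},
\]
using the continuity of $x\mapsto Tx$ in $L^{2}$ and of $x\mapsto T'(x)$ in $\mathcal{L}(\mathbb{X},L^{2})$ already proved in Lemma \ref{lemma_T'}.
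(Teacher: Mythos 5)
Your proposal is correct and follows exactly the route the paper takes: the paper states the lemma as an immediate consequence of Lemma \ref{lemma_T'} together with the chain rule, which is precisely your decomposition $\varphi = N\circ T$ with $N(u)=\tfrac12\Vert u-y\Vert_{L^{2}}^{2}$. The only difference is that you spell out the (standard) $C^{1}$-ness of the norm-squared functional and offer an alternative direct differentiation, details the paper leaves implicit.
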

\begin{lm}
\label{lemma_varphi_is_coercive} Assume that and \textbf{\textit{C($f$)}} are satisfied. Then the functional $\varphi :\mathbb{X}\rightarrow \mathbb{R}$ is coercive.
\end{lm}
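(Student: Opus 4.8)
The plan is to bound $\varphi$ from below by a quantity that tends to $+\infty$ together with $\|x\|_{\X}$, relying only on the reverse triangle inequality in $L^{2}$, the growth bound \textbf{\textit{C($f$)}}, and the Poincar\'e-type estimate (\ref{ineq_from_assert}). Note that \textbf{\textit{C($f_{x}$)}} plays no role here; only \textbf{\textit{C($f$)}} (and the continuity from \textbf{\textit{C(c)}}, needed merely so that $f(\cdot,x(\cdot))\in L^{2}$) is used.

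First I would fix $x\in\X$ and apply the reverse triangle inequality:
\[
\Vert Tx-y\Vert_{L^{2}}=\Vert\ddot{x}-f(\cdot,x(\cdot))-y\Vert_{L^{2}}\geqslant\Vert\ddot{x}\Vert_{L^{2}}-\Vert f(\cdot,x(\cdot))\Vert_{L^{2}}-\Vert y\Vert_{L^{2}}.
\]
Next, using \textbf{\textit{C($f$)}} pointwise, $|f(t,x(t))|\leqslant A|x(t)|+B$, so that $\Vert f(\cdot,x(\cdot))\Vert_{L^{2}}\leqslant A\Vert x\Vert_{L^{2}}+B$, the constant term contributing exactly $B$ because $[0,1]$ has unit length. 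Then the last chain in (\ref{ineq_from_assert}) gives $\Vert x\Vert_{L^{2}}\leqslant\tfrac{1}{\pi^{2}}\Vert\ddot{x}\Vert_{L^{2}}=\tfrac{1}{\pi^{2}}\Vert x\Vert_{\X}$. Combining these estimates yields
\[
\Vert Tx-y\Vert_{L^{2}}\geqslant\Bigl(1-\tfrac{A}{\pi^{2}}\Bigr)\Vert x\Vert_{\X}-B-\Vert y\Vert_{L^{2}}.
\]

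Since $A<\pi^{2}$ by \textbf{\textit{C($f$)}}, the coefficient $1-A/\pi^{2}$ is strictly positive, so the right-hand side tends to $+\infty$ as $\Vert x\Vert_{\X}\to\infty$; in particular it is nonnegative once $\Vert x\Vert_{\X}$ is large enough, and squaring gives $\varphi(x)=\tfrac12\Vert Tx-y\Vert_{L^{2}}^{2}\to\infty$. There is essentially no obstacle: the only subtlety is that the reverse triangle bound can be negative for small $\Vert x\Vert_{\X}$, so I would phrase the conclusion for $\Vert x\Vert_{\X}$ sufficiently large (or pass to the positive part before squaring), which suffices for coercivity.
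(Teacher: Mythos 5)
Your proposal is correct and follows essentially the same route as the paper: the reverse triangle inequality in $L^{2}$, the growth bound \textbf{\textit{C($f$)}} giving $\Vert f(\cdot,x(\cdot))\Vert_{L^{2}}\leqslant A\Vert x\Vert_{L^{2}}+B$, and the estimate $\Vert x\Vert_{L^{2}}\leqslant\tfrac{1}{\pi^{2}}\Vert x\Vert_{\X}$ from (\ref{ineq_from_assert}), yielding the same lower bound $\left(1-\tfrac{A}{\pi^{2}}\right)\Vert x\Vert_{\X}-B-\Vert y\Vert_{L^{2}}$. Your extra remark about the bound being negative for small $\Vert x\Vert_{\X}$ is a harmless refinement that the paper leaves implicit.
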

\begin{proof}
Fix $x\in\X$. Using assumption we obtain
\begin{align*}
\|Tx-y\|_{L^2}&=\left(\int_0^1 |\ddot{x}(t)-f(t,x(t))-y(t)|^{2}dt\right)^\frac{1}{2} \\
&\geqslant\left(\int_{0}^{1}|\ddot{x}(t)|^{2}dt\right)^\frac{1}{2}-\left(\int_{0}^{1}|f(t,x(t))|^{2}dt\right)^\frac{1}{2}-\left(\int_{0}^{1}|y(t)|^{2}dt\right)^\frac{1}{2} \\
&\geqslant\left(\int_{0}^{1}|\ddot{x}(t)|^{2}dt\right)^\frac{1}{2}-\left(\int_{0}^{1}|Ax(t)+B|^{2}dt\right)^\frac{1}{2}-\left(\int_{0}^{1}|y(t)|^{2}dt\right)^\frac{1}{2} \\
&\geqslant\|\ddot{x}\|_{L^2}-A\|x\|_{L^2}-B-\|y\|_{L^2}\geqslant\left( 1-\tfrac{A}{\pi ^{2}}\right) \Vert x\Vert _{\mathbb{X}}-B-\Vert y\Vert _{L^{2}}.
\end{align*}
Hence $\varphi $ is coercive.
\end{proof}
\begin{lm}
\label{lamma_varphi_is_PS} Assume that \textbf{\textit{C(c)}} and \textbf{\textit{C($f$)}} are satisfied. Then functional $\varphi :\mathbb{X}\rightarrow\mathbb{R}$ satisfies the PS condition.
\end{lm}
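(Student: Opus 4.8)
The plan is to show that any Palais--Smale sequence for $\varphi$ is bounded in $\mathbb{X}$, and then to extract a convergent subsequence by exploiting compactness of the embedding $\mathbb{X}\hookrightarrow C[0,1]$ together with the structure of $\varphi'$. First I would take a sequence $(x_n)_{n\in\mathbb{N}}\subset\mathbb{X}$ with $(\varphi(x_n))_n$ bounded and $\varphi'(x_n)\to\theta_{\mathbb{X}^*}$. Since $\varphi$ is coercive by Lemma \ref{lemma_varphi_is_coercive}, boundedness of $(\varphi(x_n))_n$ immediately forces $(x_n)_n$ to be bounded in $\mathbb{X}$. Because $\mathbb{X}$ is a Hilbert space, $(x_n)_n$ has a weakly convergent subsequence, say $x_n\rightharpoonup x_0$ in $\mathbb{X}$; and by Lemma 1.2 from \cite{mawhin} the embedding $\mathbb{X}\hookrightarrow C[0,1]$ is compact, so along a further subsequence $x_n\xrightarrow{C[0,1]}x_0$. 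In particular $f(\cdot,x_n(\cdot))\to f(\cdot,x_0(\cdot))$ and $f_x(\cdot,x_n(\cdot))\to f_x(\cdot,x_0(\cdot))$ uniformly on $[0,1]$, using \textbf{\textit{C(c)}}.

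Next I would test the derivative against $h=x_n-x_0$. From the formula in Lemma \ref{lemma_varphi_is_C^1},
\[
\varphi'(x_n;x_n-x_0)=\int_0^1\bigl(\ddot{x}_n(t)-f(t,x_n(t))-y(t)\bigr)\bigl(\ddot{x}_n(t)-\ddot{x}_0(t)-f_x(t,x_n(t))(x_n(t)-x_0(t))\bigr)\,dt,
\]
and since $\|x_n-x_0\|_{\mathbb{X}}$ is bounded while $\varphi'(x_n)\to\theta$, the left-hand side tends to $0$. The plan is to expand the integrand and argue that every term containing a factor $x_n-x_0$ (not differentiated twice) is negligible: indeed $\|x_n-x_0\|_\infty\to 0$, the quantities $\|f_x(\cdot,x_n(\cdot))\|_\infty$ and $\|\ddot{x}_n\|_{L^2}$ are bounded, and $\|f(\cdot,x_n(\cdot))+y\|_{L^2}$ is bounded, so all such cross terms vanish in the limit. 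What survives is
\[
\int_0^1\ddot{x}_n(t)\bigl(\ddot{x}_n(t)-\ddot{x}_0(t)\bigr)\,dt=\|\ddot{x}_n\|_{L^2}^2-\langle\ddot{x}_n,\ddot{x}_0\rangle_{L^2}\longrightarrow 0.
\]
Since $x_n\rightharpoonup x_0$ in $\mathbb{X}$ means precisely $\ddot{x}_n\rightharpoonup\ddot{x}_0$ in $L^2$, the inner product $\langle\ddot{x}_n,\ddot{x}_0\rangle_{L^2}\to\|\ddot{x}_0\|_{L^2}^2$, and hence $\|\ddot{x}_n\|_{L^2}^2\to\|\ddot{x}_0\|_{L^2}^2$, i.e. $\|x_n\|_{\mathbb{X}}\to\|x_0\|_{\mathbb{X}}$. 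Combined with weak convergence in the Hilbert space $\mathbb{X}$, this yields $x_n\xrightarrow{\mathbb{X}}x_0$, which is the desired convergent subsequence.

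The main obstacle is the careful bookkeeping in the step where the integrand of $\varphi'(x_n;x_n-x_0)$ is split: one must check that each of the several cross terms is genuinely controlled, using the right norm for each factor (sup-norm for the undifferentiated $x_n-x_0$ factors, $L^2$ for the rest) and the uniform bounds coming from coercivity and from \textbf{\textit{C(c)}}--\textbf{\textit{C($f$)}}. Once that is in place, the reduction to $\|\ddot{x}_n\|_{L^2}^2-\langle\ddot{x}_n,\ddot{x}_0\rangle_{L^2}\to 0$ and the passage from norm convergence plus weak convergence to strong convergence are routine Hilbert-space facts.
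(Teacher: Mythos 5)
Your overall strategy is the same as the paper's (coercivity from Lemma \ref{lemma_varphi_is_coercive} gives boundedness of the PS sequence, reflexivity gives a weakly convergent subsequence, uniform convergence comes from the embedding into $C[0,1]$, and strong convergence is extracted by testing the derivative on $x_n-x_0$), but there is a genuine gap precisely in the bookkeeping step you flag as the main obstacle. Expanding the formula from Lemma \ref{lemma_varphi_is_C^1},
\begin{align*}
\varphi'(x_n;x_n-x_0)&=\int_0^1\ddot x_n(t)\bigl(\ddot x_n(t)-\ddot x_0(t)\bigr)\,dt-\int_0^1\bigl(f(t,x_n(t))+y(t)\bigr)\bigl(\ddot x_n(t)-\ddot x_0(t)\bigr)\,dt\\
&\quad-\int_0^1\bigl(\ddot x_n(t)-f(t,x_n(t))-y(t)\bigr)f_x(t,x_n(t))\bigl(x_n(t)-x_0(t)\bigr)\,dt.
\end{align*}
Only the last group consists of terms with an undifferentiated factor $x_n-x_0$, and those indeed vanish via $\|x_n-x_0\|_\infty\to0$ together with the uniform bounds you list. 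The middle term, however, contains no such factor and is not part of your claimed surviving term $\int_0^1\ddot x_n(\ddot x_n-\ddot x_0)\,dt$; your stated mechanism (sup-norm for the undifferentiated factors, $L^2$ for the rest) yields only boundedness for it, since $\|\ddot x_n-\ddot x_0\|_{L^2}$ is exactly the quantity whose smallness you are trying to prove. The term does tend to zero, but this requires a separate argument: split $f(t,x_n)+y=\bigl(f(t,x_n)-f(t,x_0)\bigr)+\bigl(f(t,x_0)+y\bigr)$; the first piece converges to zero in $L^2$ (uniform convergence of $x_n$ and \textbf{\textit{C(c)}}) and is paired with the bounded factor $\ddot x_n-\ddot x_0$, while the second piece is a fixed $L^2$ function paired with $\ddot x_n-\ddot x_0\rightharpoonup0$ weakly in $L^2$. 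With this step inserted, your reduction to $\|\ddot x_n\|_{L^2}^2-\langle\ddot x_n,\ddot x_0\rangle_{L^2}\to0$ and the final Hilbert-space argument (weak convergence plus convergence of norms implies strong convergence) are correct.

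It is worth noting how the paper avoids this issue: it tests the \emph{difference} of derivatives, using the identity $\varphi'(x_0)(x_0-x_n)-\varphi'(x_n)(x_0-x_n)=\|\ddot x_0-\ddot x_n\|_{L^2}^2+\sum_{i=1}^6\psi_i(x_n)$, in which every residual term contains either the factor $f(\cdot,x_n)-f(\cdot,x_0)$ (small in $L^2$) or an undifferentiated $x_n-x_0$ (small in sup norm); the left-hand side tends to zero because $\varphi'(x_0)$ is a fixed continuous functional applied to a weakly null sequence and because of (PS2) with boundedness. Your one-sided version is workable, but only after adding the weak-convergence argument for the omitted term.
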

\begin{proof}
Let $(x_{n})_{n\in \mathbb{N}}\subset \mathbb{X}$ be a PS sequence
for $\varphi $, i.e.\\
\begin{minipage}{0.5\textwidth}
\begin{itemize}
\item[(PS1)] $(\varphi(x_n))_{n\in\mathbb{N}}$ is bounded,
\end{itemize}
\end{minipage}
\begin{minipage}{0.5\textwidth}
\begin{itemize}
\item[(PS2)] $\varphi^{\prime }(x_n)\xrightarrow{\X^*} \theta_{\mathbb{X}^*}$.
\end{itemize}
\end{minipage}
By Lemmas \ref{lemma_varphi_is_C^1} and \ref{lemma_varphi_is_coercive} functional $\varphi $ is coercive and $C^{1}$. Hence sequence $(x_n)_{n\in\N}$ is bounded in $\X$ by (PS1). Therefore, since $\mathbb{X}$ is reflexive, there exists a subsequence $\left( x_{n_{k}}\right) _{k\in \mathbb{N}}$ and point $x_{0}$ such that $x_{n_{k}}\overset{\mathbb{X}}{\rightharpoonup }x_{0}$. Without loss of the generality, we may assume that $x_{n}\overset{\mathbb{X}}{\rightharpoonup }x_{0}$. Therefore by definition of $\mathbb{X}$ it follows that $x_{n}\overset{H_{0}^{1}}{\rightharpoonup }x_{0}$ and $x_{n}\overset{H^{2}}{\rightharpoonup }x_{0}$. Hence $x_{n}\xrightarrow{C\left[ 0,1\right] }x_{0}$ and $x_{n}\xrightarrow{L^{2}}x_{0}$. It could be calculated that 
\begin{equation}
\varphi ^{\prime }(x_{0})(x_{0}-x_{n})-\varphi ^{\prime}(x_{n})(x_{0}-x_{n})=\Vert \ddot{x}_{0}-\ddot{x}_{n}\Vert_{L^{2}}^{2}+\sum_{i=1}^{6}\psi _{i}(x_{n}),  \label{PS}
\end{equation}
where 
\begin{align*}
\psi _{1}(x_{n})& =-\int_{0}^{1}(\ddot{x}_{n}(t)-\ddot{x}_{0}(t))(f(t,x_{n}(t))-f(t,x_{0}(t)))dt, \\
\psi _{2}(x_{n})& =-\int_{0}^{1}\ddot{x}_{n}(t)f_{x}(t,x_{n}(t))(x_{n}(t)-x_{0}(t))dt, \\
\psi _{3}(x_{n})& =\int_{0}^{1}\ddot{x}_{0}(t)f_{x}(t,x_{0}(t))(x_{n}(t)-x_{0}(t))dt, \\
\psi _{4}(x_{n})&=\int_{0}^{1}y(t)(f_{x}(t,x_{n}(t))-f_{x}(t,x_{0}(t)))(x_{n}(t)-x_{0}(t))dt,\\
\psi _{5}(x_{n})&=\int_{0}^{1}f(t,x_{n}(t))f_{x}(t,x_{n}(t))(x_{n}(t)-x_{0}(t))dt, \\
\psi _{6}(x_{n})&=-\int_{0}^{1}f(t,x_{0}(t))f_{x}(t,x_{0}(t))(x_{n}(t)-x_{0}(t))dt.
\end{align*}
Using the Lebesgue Dominated Convergence Theorem and assumption \textbf{\textit{C(c)}} we obtain that $f(\cdot,x_{n}(\cdot ))\xrightarrow{L^2}f(\cdot ,x_{0}(\cdot ))$ and $f_{x}(\cdot,x_{n}(\cdot ))\xrightarrow{L^2}f_{x}(\cdot ,x_{0}(\cdot ))$. Hence 
\begin{align*}
|\psi _{1}(x_{n})|&\leqslant \left(\int_{0}^{1}|f_{x}(t,x_{n}(t))-f_{x}(t,x_{0}(t))|^{2}dt\right) ^{\frac{1}{2}}\left( \int_{0}^{1}|\ddot{x}_{0}(t)-\ddot{x}_{n}(t)|^{2}dt\right) ^{\frac{1}{2}}\rightarrow 0,\\
|\psi _{2}(x_{n})|& \leqslant \Vert x_{n}-x_{0}\Vert _{\infty }\int_{0}^{1}|\ddot{x}_{n}(t)f_{x}(t,x_{n}(t))|dt \\
& \leqslant \Vert x_{n}-x_{0}\Vert _{\infty }\Vert \ddot{x}_{n}\Vert_{L^{2}}\left( \int_{0}^{1}|f_{x}(t,x_{n}(t))|^{2}dt\right) ^{\frac{1}{2}}\rightarrow 0,\\
|\psi _{3}(x_{n})|&\leqslant \Vert x_{n}-x_{0}\Vert _{\infty }\int_{0}^{1}|\ddot{x}_{0}(t)f_{x}(t,x_{0}(t))|dt\rightarrow 0,\\
|\psi _{4}(x_{n})|&\leqslant \Vert x_{n}-x_{0}\Vert _{\infty }\Vert f_{x}(\cdot ,x_{n}(\cdot ))-f_{x}(\cdot ,x_{0}(\cdot ))\Vert _{\infty
}\int_{0}^{1}|y(t)|dt\rightarrow 0,\\
|\psi _{5}(x_{n})|&\leqslant \Vert f(\cdot ,x_{n}(\cdot ))\Vert _{\infty}\cdot \Vert f_{x}(\cdot ,x_{n}(\cdot ))\Vert _{\infty }\cdot \Vert x_{n}-x_{0}\Vert _{\infty }\rightarrow 0,\\
|\psi _{6}(x_{n})|&\leqslant \Vert f(\cdot ,x_{0}(\cdot ))\Vert _{\infty}\Vert f_{x}(\cdot ,x_{0}(\cdot ))\Vert _{\infty }\cdot \Vert
x_{n}-x_{0}\Vert _{\infty }\rightarrow 0.
\end{align*}
Moreover, since $(x_{n})_{n\in \mathbb{N}}$ is weakly convergent, we get $\varphi ^{\prime }(x_{0})(x_{0}-x_{n})\rightarrow 0$. By boundedness of $(x_{n})_{n\in \mathbb{N}}$ in $\mathbb{X}$ and by (PS2) we have 
\begin{equation*}
|\varphi ^{\prime }(x_{n})(x_{0}-x_{n})|\leqslant \Vert \varphi ^{\prime}(x_{n})\Vert _{\mathbb{X}^{\ast }}\Vert x_{0}-x_{n}\Vert _{\mathbb{X}}\rightarrow 0.
\end{equation*}
Equality (\ref{PS}) implies $\Vert \ddot{x}_{n}-\ddot{x}_{0}\Vert_{L^{2}}\rightarrow 0$, which means that $\varphi $ satisfies the PS condition.
\end{proof}
Now we show that condition (d2) of Theorem \ref{theorem_Idczak} is satisfied. To do this we prove that, for every fixed $x\in \mathbb{X}$ and $y\in L^{2}$, the following problem 
\begin{equation}
\left\{ 
\begin{array}{l}
\ddot{\xi}(t)=f_{x}(t,x(t))\xi (t)+y(t), \\ 
\xi (0)=\xi (1)=0,
\end{array}
\right.  \label{invertible}
\end{equation}
has unique solution in $\mathbb{X}$. We define the Euler action functional $J:H_{0}^{1}\rightarrow \mathbb{R}$ by the formula 
\begin{equation*}
J(\xi )=\tfrac{1}{2}\int_{0}^{1}|\dot{\xi}(t)|^{2}dt+\tfrac{1}{2}\int_{0}^{1}f_{x}(t,x(t))|\xi (t)|^{2}dt+\int_{0}^{1}y(t)\xi (t)dt.
\end{equation*}
\begin{lm}
\label{J_is_coercive_l.s.c.} Assume that \textbf{\textit{C(c)}} and \textbf{\textit{C($f_x$)}} are satisfied. Then the functional $J$ is of class $C^{1}$, it is strictly convex and coercive on $H_{0}^{1}$.
\end{lm}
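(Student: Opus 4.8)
The plan is to regard $J$ as a (scalar) quadratic functional and read off all three properties from the associated bilinear form. Write
\[
J(\xi)=\tfrac12\,a(\xi,\xi)+\ell(\xi),\qquad
a(\xi,\eta):=\int_0^1\dot\xi\dot\eta\,dt+\int_0^1 f_x(t,x(t))\,\xi\eta\,dt,\qquad
\ell(\xi):=\int_0^1 y\,\xi\,dt .
\]
Since $x\in\X$ is fixed, $x$ is continuous, hence $t\mapsto f_x(t,x(t))$ is continuous on $[0,1]$ and in particular bounded, and by \textbf{\textit{C($f_x$)}} we have $f_x(t,x(t))\geqslant m$ for every $t$, where $m:=\inf_{[0,1]\times\R}f_x>-\pi^2$. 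Thus $a$ is a bounded symmetric bilinear form on $H_0^1$, and $\ell$ is bounded linear because, by Cauchy--Schwarz and the Poincaré inequality $\Vert\xi\Vert_{L^2}\leqslant\tfrac1\pi\Vert\dot\xi\Vert_{L^2}$, one has $|\ell(\xi)|\leqslant\tfrac1\pi\Vert y\Vert_{L^2}\Vert\xi\Vert_{H_0^1}$. These are the only facts I will use.

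For the $C^1$ claim I would compute the directional derivative directly: $\tau\mapsto J(\xi+\tau h)$ is a quadratic polynomial in $\tau$, so $J'(\xi;h)=a(\xi,h)+\ell(h)$, which, as a function of $h\in H_0^1$, is a bounded linear functional; hence the Gâteaux differential $J'(\xi)\in(H_0^1)^{*}$ exists at every $\xi$. The map $\xi\mapsto J'(\xi)=a(\xi,\cdot)+\ell$ is affine with bounded linear part $\xi\mapsto a(\xi,\cdot)$, hence continuous from $H_0^1$ to $(H_0^1)^{*}$; by the remark in the Preliminaries this makes $J$ of class $C^1$.

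The heart of the argument is coercivity of $a$. I would estimate
\[
a(\xi,\xi)=\Vert\dot\xi\Vert_{L^2}^2+\int_0^1 f_x(t,x(t))|\xi(t)|^2\,dt\;\geqslant\;\Vert\dot\xi\Vert_{L^2}^2+m\Vert\xi\Vert_{L^2}^2 .
\]
If $m\geqslant 0$ this is $\geqslant\Vert\dot\xi\Vert_{L^2}^2=\Vert\xi\Vert_{H_0^1}^2$; if $m<0$, multiplying $\Vert\xi\Vert_{L^2}^2\leqslant\tfrac1{\pi^2}\Vert\dot\xi\Vert_{L^2}^2$ by the negative number $m$ and adding gives $a(\xi,\xi)\geqslant\bigl(1+\tfrac{m}{\pi^2}\bigr)\Vert\dot\xi\Vert_{L^2}^2$, where $1+\tfrac{m}{\pi^2}>0$ exactly because $m>-\pi^2$. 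So in all cases $a(\xi,\xi)\geqslant c\Vert\xi\Vert_{H_0^1}^2$ with $c:=\min\{1,\,1+m/\pi^2\}>0$. Coercivity of $J$ is then immediate from $J(\xi)\geqslant\tfrac c2\Vert\xi\Vert_{H_0^1}^2-\tfrac1\pi\Vert y\Vert_{L^2}\Vert\xi\Vert_{H_0^1}\to\infty$. Strict convexity follows from the quadratic identity $J(\lambda\xi+(1-\lambda)\eta)=\lambda J(\xi)+(1-\lambda)J(\eta)-\tfrac12\lambda(1-\lambda)a(\xi-\eta,\xi-\eta)$ for $\lambda\in(0,1)$, combined with $a(\xi-\eta,\xi-\eta)\geqslant c\Vert\xi-\eta\Vert_{H_0^1}^2>0$ whenever $\xi\neq\eta$.

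The only genuinely delicate point is the case $m<0$, where one must remember that multiplying the Poincaré inequality by the negative quantity $m$ reverses it; this is precisely where \textbf{\textit{C($f_x$)}} is used, and the bound $-\pi^2$ is sharp since $\pi^2$ is the optimal Poincaré constant on $H_0^1(0,1)$. Everything else is routine manipulation of quadratic forms.
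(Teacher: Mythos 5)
Your proposal is correct and takes essentially the same route as the paper: coercivity via the Poincar\'e inequality $\Vert\xi\Vert_{L^2}\leqslant\tfrac1\pi\Vert\dot\xi\Vert_{L^2}$ combined with the lower bound $\inf f_x>-\pi^2$ (with the sign reversal when the bound is negative handled correctly), and strict convexity via the convexity-defect identity, which is exactly the computation the paper performs for $J_1$ written in your bilinear-form notation. The only differences are cosmetic: you package $J$ as $\tfrac12 a(\xi,\xi)+\ell(\xi)$ and split the cases $m\geqslant 0$, $m<0$ where the paper simply fixes a negative constant $B\in(-\pi^2,0)$ below $f_x$, and you spell out the $C^1$ argument that the paper only asserts.
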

\begin{proof}
Fix $x\in\X$, then $f_{x}(\cdot ,x(\cdot ))$ is a fixed continuous function by \textbf{\textit{C(c)}} and hence $J$ is $C^{1}$. Using \textbf{\textit{C($f_{x}$)}} we get the existence of some negative constant $B$, $B>-\pi ^{2}$, such that $\inf\limits_{[0,1]\times \mathbb{R}}f_{x}\geqslant B$. Hence $J$ is coercive. Indeed, 
\begin{align*}
J(\xi )& \geqslant \tfrac{1}{2}\int_{0}^{1}|\dot{\xi}(t)|^{2}dt+\tfrac{1}{2}\inf\limits_{[0,1]\times \mathbb{R}}f_{x}\int_{0}^{1}|\xi
(t)|^{2}dt-\left\vert \int_{0}^{1}y(t)\xi (t)dt\right\vert \\
& \geqslant \tfrac{1}{2}\int_{0}^{1}|\dot{\xi}(t)|^{2}dt+\tfrac{B}{2\pi ^{2}}\int_{0}^{1}|\dot{\xi}(t)|^{2}dt-\left( \int_{0}^{1}|y(t)|^{2}dt\right) ^{\frac{1}{2}}\left( \int_{0}^{1}|\xi (t)|^{2}dt\right) ^{\frac{1}{2}} \\
& \geqslant\tfrac{\pi^2+B}{2\pi^2}\Vert \xi \Vert _{H_{0}^{1}}^{2}-\tfrac{1}{\pi }\Vert y\Vert _{L^{2}}\Vert\xi \Vert _{H_{0}^{1}}.
\end{align*}
We show that $J$ is strictly convex. Because a sum of convex and strictly convex functions is strictly convex then it is enough to proof that $\displaystyle J_{1}(u)=\int_{0}^{1}|\dot{u}(t)|^{2}+f_{x}(t,x(t))|u(t)|^{2}dt $ is strictly convex. Fix $\lambda \in
(0,1)$ and $u,w\in H_{0}^{1}$ such that $u\neq w$. Then 
\begin{align*}
(1-\lambda )J_{1}(u)& +\lambda J_{1}(w)-J_{1}((1-\lambda )u+\lambda w) \\
& =\lambda (1-\lambda )\int_{0}^{1}|\dot{u}(t)-\dot{w}(t)|^{2}+f_{x}(t,x(t))|u(t)-w(t)|^{2}dt \\
& \geqslant \lambda (1-\lambda )\left( \int_{0}^{1}|\dot{u}(t)-\dot{w}(t)|^{2}dt+\inf\limits_{[0,1]\times \mathbb{R}}f_{x}%
\int_{0}^{1}|u(t)-w(t)|^{2}dt\right) \\
& \geqslant \lambda (1-\lambda )\left( \pi ^{2}\Vert u-w\Vert_{L^{2}}^{2}+B\Vert u-w\Vert _{L^{2}}^{2}\right) \geqslant \lambda (1-\lambda )\left( \pi ^{2}+B\right) \Vert u-w\Vert_{L^{2}}^{2}>0.
\end{align*}
Therefore $(1-\lambda )J(u)+\lambda J(w)>J((1-\lambda )u+\lambda w)$ and the assertion holds.
\end{proof}
\begin{lm}
\label{lemma_invertible} Assume that \textbf{\textit{C(c)}} and \textbf{\textit{C($f_x$)}} are satisfied. Then problem (\ref{invertible}) has exactly one solution in $\mathbb{X}$.
\end{lm}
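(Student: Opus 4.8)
The plan is to obtain the solution of (\ref{invertible}) as the unique minimizer of the Euler action functional $J$ and then to bootstrap its regularity. By Lemma \ref{J_is_coercive_l.s.c.}, $J$ is $C^{1}$, strictly convex and coercive on the Hilbert space $H_{0}^{1}$. Being convex and continuous, $J$ is weakly sequentially lower semicontinuous; being coercive on the reflexive space $H_{0}^{1}$, it therefore attains its infimum at some $\xi _{0}\in H_{0}^{1}$, and strict convexity forces this minimizer to be unique. By Fermat's rule $J^{\prime }(\xi _{0})=0$, that is,
\[
\int_{0}^{1}\dot{\xi}_{0}(t)\dot{h}(t)\,dt+\int_{0}^{1}f_{x}(t,x(t))\xi _{0}(t)h(t)\,dt+\int_{0}^{1}y(t)h(t)\,dt=0
\]
for every $h\in H_{0}^{1}$.

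Next I would read this identity as a weak formulation of (\ref{invertible}). Testing against $h\in C_{c}^{\infty }(0,1)$, it says that the weak derivative of $\dot{\xi}_{0}$ equals $f_{x}(\cdot ,x(\cdot ))\xi _{0}+y$, which lies in $L^{2}$ because $f_{x}(\cdot ,x(\cdot ))$ is continuous, hence bounded, on $[0,1]$ by \textbf{\textit{C(c)}}, while $\xi _{0},y\in L^{2}$. Thus $\dot{\xi}_{0}\in H^{1}$, so $\xi _{0}\in H^{2}\cap H_{0}^{1}=\mathbb{X}$, and $\ddot{\xi}_{0}(t)=f_{x}(t,x(t))\xi _{0}(t)+y(t)$ for a.e. $t\in[0,1]$ with $\xi _{0}(0)=\xi _{0}(1)=0$; hence $\xi _{0}$ is a solution of (\ref{invertible}) in $\mathbb{X}$.

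For uniqueness within $\mathbb{X}$ I would show that every solution of (\ref{invertible}) in $\mathbb{X}$ is a critical point of $J$: multiplying the equation by an arbitrary $h\in H_{0}^{1}$ and integrating by parts (the boundary terms vanish since both $\xi $ and $h$ vanish at $0$ and $1$) recovers precisely $J^{\prime }(\xi )(h)=0$. Since a strictly convex $C^{1}$ functional has at most one critical point — its derivative is strictly monotone, hence injective — the solution of (\ref{invertible}) in $\mathbb{X}$ is unique.

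The only mildly delicate point is the passage from the weak identity to the strong (a.e.) equation together with the $H^{2}$-regularity of $\xi _{0}$, but this is the standard one-dimensional bootstrap and presents no genuine difficulty here; everything else is a direct application of the direct method of the calculus of variations and of Lemma \ref{J_is_coercive_l.s.c.}.
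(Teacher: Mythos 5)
Your proposal is correct and follows essentially the same route as the paper: minimize the strictly convex, coercive functional $J$ on $H_{0}^{1}$, identify the unique critical point via the weak Euler--Lagrange equation, and bootstrap regularity to land in $\mathbb{X}$. The only cosmetic differences are that the paper invokes the du Bois-Reymond lemma where you argue directly with the distributional derivative of $\dot{\xi}_{0}$, and that you spell out explicitly (via integration by parts) why every $\mathbb{X}$-solution of (\ref{invertible}) is a critical point of $J$, a step the paper leaves implicit.
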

\begin{proof}
By Lemma \ref{J_is_coercive_l.s.c.} $J$ is coercive and strictly convex. Since $J$ is continuous and strictly convex it is also weakly lower semicontinuous. Hence it possesses exactly one critical point. It means that there exists exactly one $\xi ^{\star }\in H_{0}^{1}$ such that 
\begin{equation*}
\int_{0}^{1}\dot{\xi}^{\star }(t)\dot{\psi}(t)dt=-\int_{0}^{1}\left(
f_{x}(t,x(t))\xi ^{\star }(t)+y(t)\right) \psi (t)dt
\end{equation*}
for every $\psi \in H_{0}^{1}$. From the du Bois-Reymond Lemma, Lemma 1.1. from \cite{mawhin}, there exist a number $c\in\mathbb{R}$ such that 
\begin{equation*}
\dot{\xi}^{\star }(t)=\int_{0}^{t}f_{x}(s,x(s))\xi ^{\star }(s)+y(s)ds+c
\end{equation*}
for almost every $t\in \lbrack 0,1]$. Since $f_{x}(\cdot ,x(\cdot ))\xi^{\star }(\cdot )+y(\cdot )$ is integrable with square and hence $\xi^{\star }\in\mathbb{X}$.
\end{proof}
We conclude our considerations with following theorem
\begin{tw}
\label{theorem_T_is_diffeomorphism} Assume that \textbf{\textit{C(c)}}, \textbf{\textit{C($f$)}} and \textbf{\textit{C($f_x$)}} are satisfied. Then operator $T$ is diffeomorhism.
\end{tw}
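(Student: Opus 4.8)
The plan is to apply Theorem \ref{theorem_Idczak} with $X=\mathbb{X}$, $H=L^{2}$ and $F=T$. Lemma \ref{lemma_T'} already provides, under \textbf{\textit{C(c)}}, that $T$ is of class $C^{1}$, so the only things left to verify are conditions (d1) and (d2) of that theorem.

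For (d1) I would fix an arbitrary $y\in L^{2}$ and consider $\varphi(x)=\tfrac{1}{2}\|Tx-y\|_{L^{2}}^{2}$. Lemma \ref{lemma_varphi_is_C^1} gives that $\varphi$ is $C^{1}$, and Lemma \ref{lamma_varphi_is_PS} gives that $\varphi$ satisfies the PS condition under \textbf{\textit{C(c)}} and \textbf{\textit{C($f$)}}. That is precisely condition (d1). (Lemma \ref{lemma_varphi_is_coercive} is used inside the proof of the PS condition, so coercivity enters only implicitly here.)

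For (d2) I would fix $x\in\mathbb{X}$ and show that $T^{\prime}(x)\colon\mathbb{X}\to L^{2}$ is a continuous linear bijection. Linearity and continuity of $T^{\prime}(x)$ were obtained in the proof of Lemma \ref{lemma_T'}. For bijectivity, note that by the formula in Lemma \ref{lemma_T'} the equation $T^{\prime}(x;\xi)=y$ reads $\ddot{\xi}(t)-f_{x}(t,x(t))\xi(t)=y(t)$ a.e.\ on $[0,1]$ together with $\xi(0)=\xi(1)=0$, which is exactly problem (\ref{invertible}) with right-hand side $y$. By Lemma \ref{lemma_invertible}, valid under \textbf{\textit{C(c)}} and \textbf{\textit{C($f_{x}$)}}, this problem has exactly one solution in $\mathbb{X}$; hence $T^{\prime}(x)$ is both injective and surjective, so in particular $T^{\prime}(x)[\mathbb{X}]=L^{2}$, and the bounded inverse theorem makes $(T^{\prime}(x))^{-1}$ continuous. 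Thus $T^{\prime}(x)$ is invertible in the required sense and (d2) holds.

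Having verified (d1) and (d2), Theorem \ref{theorem_Idczak} applies and yields at once that $T$ is a diffeomorphism from $\mathbb{X}$ onto $L^{2}$. I do not expect a real obstacle at this stage: all the analytic work has been done in the preceding lemmas, and the only point requiring a moment's care is the identification of the linearized equation $T^{\prime}(x;\xi)=y$ with the auxiliary Dirichlet problem (\ref{invertible}) — once this is observed, unique solvability from Lemma \ref{lemma_invertible} delivers injectivity and surjectivity of $T^{\prime}(x)$ simultaneously, and continuity of the inverse comes for free from the open mapping theorem.
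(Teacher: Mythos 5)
Your proposal is correct and follows exactly the paper's route: the paper's proof is just the citation of Lemmas \ref{lemma_T'}, \ref{lamma_varphi_is_PS} and \ref{lemma_invertible} together with Theorem \ref{theorem_Idczak}, and you have merely spelled out the same verification of (d1) and (d2), including the correct identification of $T^{\prime}(x;\xi)=y$ with problem (\ref{invertible}).
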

\begin{proof}
It is a consequence of Lemmas \ref{lemma_T'}, \ref{lamma_varphi_is_PS}, \ref{lemma_invertible} and Theorem \ref{theorem_Idczak}.
\end{proof}
As we mentioned before, Theorem \ref{theorem_T_is_diffeomorphism} proves Theorem \ref{theorem_solvability_of_continuous}. 
\section{Solvability of the discrete boundary value problem}
Fix $N\in\N$, $N\geqslant 2$. We define $\mathbb{E}_{N}$ as a space of those functions $x:\mathbb{N}(0,N)\rightarrow \mathbb{R}$ for which $x(0)=x(N)=0$. Let us put $x(k)=0$ for every $k\notin \mathbb{N}(0,N)$. Consider the following norms on $\mathbb{E}_{N}$
$$
\begin{array}{ll}
\displaystyle\Vert x\Vert _{N}:=\left( \sum_{i=1}^{N-1}|x(i)|^{2}\right) ^{\frac{1}{2}}, &\displaystyle\Vert x\Vert _{\Delta _{N}}:=\left( \sum_{i=1}^{N}|\Delta x(i-1)|^{2}\right) ^{\frac{1}{2}},\\
\displaystyle\Vert x\Vert _{\mathbb{E}_{N}}:=\left( \sum_{i=1}^{N-1}|\Delta ^{2}x(i-1)|^{2}\right) ^{\frac{1}{2}}, &\displaystyle\Vert x\Vert _{\infty _{N}}:=\max\limits_{k\in \mathbb{N}(0,N)}|x(k)|.
\end{array}
$$
\begin{lm}
\label{lemma_inequalieties} For every $x\in \mathbb{E}_{N}$ we have 
\begin{equation*}
\tfrac{1}{4}\Vert x\Vert _{\mathbb{E}_{N}}\leqslant \tfrac{1}{2}\Vert x\Vert_{\Delta _{N}}\leqslant \Vert x\Vert _{N}\leqslant \sqrt{N}\Vert x\Vert_{\infty _{N}}\leqslant N\Vert x\Vert _{\Delta _{N}}\leqslant N^{2}\Vert x\Vert _{\mathbb{E}_{N}}.
\end{equation*}
\end{lm}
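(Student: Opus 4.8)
The plan is to establish the five inequalities in the chain one at a time, from left to right. The first four are obtained by elementary manipulations — the triangle inequality, the identities $\Delta^{2}x(i-1)=\Delta x(i)-\Delta x(i-1)$ and $\Delta x(i-1)=x(i)-x(i-1)$, the boundary conditions $x(0)=x(N)=0$, and the Cauchy–Schwarz inequality — while the last inequality $N\Vert x\Vert_{\mathbb{E}_N}\leqslant N^{2}\Vert x\Vert_{\mathbb{E}_N}$, more precisely $\Vert x\Vert_{\Delta_N}\leqslant N\Vert x\Vert_{\mathbb{E}_N}$, will require a discrete summation-by-parts argument that plays the role of the continuous estimate $\Vert\dot{x}\Vert_{L^{2}}^{2}=-\int_{0}^{1}x\ddot{x}$ used in Lemma 3.1.

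Concretely: for $\tfrac14\Vert x\Vert_{\mathbb{E}_N}\leqslant\tfrac12\Vert x\Vert_{\Delta_N}$ I would write $\Delta^{2}x(i-1)=\Delta x(i)-\Delta x(i-1)$, use $|a-b|^{2}\leqslant 2|a|^{2}+2|b|^{2}$, and observe that both sums $\sum_{i=1}^{N-1}|\Delta x(i)|^{2}$ and $\sum_{i=1}^{N-1}|\Delta x(i-1)|^{2}$ are bounded by $\Vert x\Vert_{\Delta_N}^{2}=\sum_{i=1}^{N}|\Delta x(i-1)|^{2}$. For $\tfrac12\Vert x\Vert_{\Delta_N}\leqslant\Vert x\Vert_{N}$ the same device applied to $\Delta x(i-1)=x(i)-x(i-1)$ works, where now $x(0)=x(N)=0$ gives $\sum_{i=1}^{N}|x(i)|^{2}=\sum_{i=1}^{N}|x(i-1)|^{2}=\Vert x\Vert_{N}^{2}$. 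The bound $\Vert x\Vert_{N}\leqslant\sqrt{N}\Vert x\Vert_{\infty_N}$ is immediate, the defining sum having $N-1<N$ terms each at most $\Vert x\Vert_{\infty_N}^{2}$. Finally $\sqrt{N}\Vert x\Vert_{\infty_N}\leqslant N\Vert x\Vert_{\Delta_N}$ follows from the telescoping representation $x(k)=\sum_{i=1}^{k}\Delta x(i-1)$ (valid since $x(0)=0$) and Cauchy–Schwarz: $|x(k)|^{2}\leqslant k\sum_{i=1}^{k}|\Delta x(i-1)|^{2}\leqslant N\Vert x\Vert_{\Delta_N}^{2}$ for every $k\in\mathbb{N}(0,N)$.

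The hard part is the last inequality. First I would prove the discrete integration-by-parts identity $\Vert x\Vert_{\Delta_N}^{2}=-\sum_{i=1}^{N-1}x(i)\,\Delta^{2}x(i-1)$ by Abel summation: the boundary terms either vanish (using $x(0)=0$) or collapse (using $x(N-1)=-\Delta x(N-1)$, which comes from $x(N)=0$), and what remains telescopes to $-\sum_{i=1}^{N}|\Delta x(i-1)|^{2}$. Cauchy–Schwarz on the right-hand side then gives $\Vert x\Vert_{\Delta_N}^{2}\leqslant\Vert x\Vert_{N}\Vert x\Vert_{\mathbb{E}_N}$, and combining the third and fourth inequalities already proven yields $\Vert x\Vert_{N}\leqslant\sqrt{N}\Vert x\Vert_{\infty_N}\leqslant N\Vert x\Vert_{\Delta_N}$, so $\Vert x\Vert_{\Delta_N}^{2}\leqslant N\Vert x\Vert_{\Delta_N}\Vert x\Vert_{\mathbb{E}_N}$. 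Dividing by $\Vert x\Vert_{\Delta_N}$ — the case $\Vert x\Vert_{\Delta_N}=0$ being trivial since then $x\equiv 0$ — gives $\Vert x\Vert_{\Delta_N}\leqslant N\Vert x\Vert_{\mathbb{E}_N}$, hence the desired bound. The only real obstacle is carrying out the summation by parts with the correct index ranges so that the boundary terms are handled cleanly; the remaining estimates are routine.
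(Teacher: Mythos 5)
Your proposal is correct and follows essentially the same route as the paper: elementary estimates (triangle-type inequalities, the boundary conditions, telescoping plus Cauchy--Schwarz) for the first four inequalities, and for the last one the discrete summation-by-parts identity $\Vert x\Vert_{\Delta_N}^{2}=-\sum_{k=1}^{N-1}x(k)\Delta^{2}x(k-1)$ combined with Cauchy--Schwarz and the already established bound $\Vert x\Vert_{N}\leqslant N\Vert x\Vert_{\Delta_N}$, then division in the nondegenerate case. The only cosmetic differences are that you use $|a-b|^{2}\leqslant 2|a|^{2}+2|b|^{2}$ where the paper invokes Minkowski's inequality (same constants result) and that you derive the summation-by-parts formula directly rather than quoting it.
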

\begin{proof}
Fix $x\in \mathbb{E}$. Using Minkowski inequality we obtain 
\begin{equation*}
\Vert x\Vert _{\Delta _{N}}=\left( \sum_{i=1}^{N}|x(i)-x(i-1)|^{2}\right) ^{\frac{1}{2}}\leqslant \left( \sum_{i=1}^{N}|x(i)|^{2}\right) ^{\frac{1}{2}}+\left( \sum_{i=1}^{N}|x(i-1)|^{2}\right) ^{\frac{1}{2}}=2\Vert x\Vert _{\mathbb{E}_{N}}.
\end{equation*}
Now, since for every $k\in \mathbb{N}(0,N)$ it holds $|x(k)|\leqslant \Vert x\Vert _{\infty _{N}}$, we get 
\begin{equation*}
\Vert x\Vert _{N}=\left( \sum_{i=1}^{N-1}|x(i)|^{2}\right) ^{\frac{1}{2}}\leqslant \Vert x\Vert _{\infty _{N}}\left( \sum_{i=1}^{N-1}1^{2}\right) ^{\frac{1}{2}}\leqslant \sqrt{N}\Vert x\Vert _{\infty _{N}}.
\end{equation*}
The maximum of $x$ is achieved at some $k^{\star }\in \mathbb{N}(0,N)$. Hence by H{\" o}lder's inequality
\begin{align*}
\Vert x\Vert _{\infty _{N}}& =|x(k^{\star })|=\left\vert\sum_{i=1}^{k^{\star }}\Delta x(i-1)\right\vert \leqslant\sum_{i=1}^{N}\left\vert \Delta x(i-1)\right\vert \\
& \leqslant \left( \sum_{i=1}^{N}\left\vert \Delta x(i-1)\right\vert^{2}\right) ^{\frac{1}{2}}\left( \sum_{i=1}^{N}1^{2}\right) ^{\frac{1}{2}}=\sqrt{N}\Vert x\Vert _{\Delta _{N}}.
\end{align*}
We already showed that $\tfrac{1}{2}\Vert x\Vert_{\Delta_{N}}\leqslant\Vert x\Vert_{N}\leqslant \sqrt{N}\Vert x\Vert_{\infty_{N}}\leqslant N\Vert x\Vert_{\Delta_{N}}$. To prove the two remaining inequalieties we need to recall summation by parts formula from \cite{kellypeterson}. If $(a_{n})_{n\in \mathbb{N}}$, $(b_{n})_{n\in \mathbb{N}}$ are real sequences and  $m\in \mathbb{N}$ is a fixed integer then 
\begin{equation}
\sum_{k=1}^{m}a_{k}\Delta b_{k}=a_{m+1}b_{m+1}-a_{1}b_{1}-\sum_{k=1}^{m}\Delta a_{k}b_{k+1}.
\label{summation_by_parts}
\end{equation}
Fixing $x\in \mathbb{E}_{N},x\neq \theta_{\E_N},$ and taking $a_{k}:=\Delta x(k-1)$, $b_{k}:=x(k-1)$, $m:=N$ in (\ref{summation_by_parts}) we get 
\begin{equation*}
\Vert x\Vert _{N}\Vert x\Vert _{\Delta _{N}}\leqslant N\Vert x\Vert _{\Delta_{N}}^{2}=N\sum\limits_{k=1}^{N}|\Delta
x(k-1)|^{2}=-N\sum\limits_{k=1}^{N-1}\Delta ^{2}x(k-1)x(k)\leqslant N\Vert x\Vert _{N}\Vert x\Vert _{\mathbb{E}_{N}}.
\end{equation*}
Dividing by $\Vert x\Vert _{N}$ we see that $\Vert x\Vert _{\Delta_{N}}\leqslant N \Vert x\Vert _{\mathbb{E}_N}$. Observe that 
\begin{align*}
\Vert x\Vert _{\mathbb{E_{N}}}& =\left( \sum\limits_{k=1}^{N-1}|\Delta x(k)-\Delta x(k-1)|^{2}\right) ^{\frac{1}{2}} \\
& \leqslant \left( \sum\limits_{k=1}^{N-1}|\Delta x(k)|^{2}\right) ^{\frac{1}{2}}+\left( \sum\limits_{k=1}^{N-1}|\Delta x(k-1)|^{2}\right) ^{\frac{1}{2}}\leqslant 2\Vert x\Vert _{\Delta _{N}}.
\end{align*}
This finishes the proof.
\end{proof}
Now we consider (\ref{discretization}) where $v:[0,1]\rightarrow \mathbb{R}$ is a fixed continuous function. Same as before we start with formulating the main theorem.
\begin{tw}
\label{theorem_solvability_of_discretization} Let $v:[0,1]\rightarrow \mathbb{R}$ be a fixed continuous function. Suppose that $f:[0,1]\times\mathbb{R}\rightarrow\mathbb{R}$ satisfies the following assumptions
\begin{itemize}
\item[\textbf{C(c)}:] $f$ is continuous on $[0,1]\times\R$ and it has continuous partial derivative $f_{x}$ with respect to the second coordinate on $[0,1]\times\R$, 
\item[\textbf{D($f$)}:] there exist a positive constants $A$ and $B$, $A<1$, such that for every $t\in[0,1]$ there is $|f(t,x)|\leqslant A|x|+B$,
\item[\textbf{D($f_x$)}:] $\inf\limits_{[0,1]\times\mathbb{R}} f_{x}> -1$.
\end{itemize}
Then problem (\ref{discretization}) has a unique solution.
\end{tw}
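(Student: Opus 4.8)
The plan is to run the same strategy as in the continuous case, but in the finite‑dimensional space $\mathbb{E}_N$, where it simplifies considerably: instead of Theorem \ref{theorem_Idczak} I would invoke the Hadamard theorem (Theorem \ref{theorem_Hadamard}), so no Palais--Smale condition has to be verified, and the invertibility of the derivative reduces to plain linear algebra. Concretely, I would introduce the discrete solution operator $T_N:\mathbb{E}_N\to\mathbb{E}_N$,
\[
(T_N x)(k):=\Delta^{2}x(k-1)-\tfrac{1}{N^{2}}f\!\left(\tfrac{k}{N},x(k)\right),\qquad k\in\mathbb{N}(1,N-1),
\]
(with $(T_N x)(0)=(T_N x)(N)=0$), so that $x_N$ solves (\ref{discretization}) precisely when $T_N x_N=w_N$, where $w_N(k):=\tfrac{1}{N^{2}}v\!\left(\tfrac{k}{N}\right)$. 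I would equip the domain with $\Vert\cdot\Vert_{\mathbb{E}_N}$ and the target with $\Vert\cdot\Vert_{N}$ (equivalently, identify $\mathbb{E}_N$ with $\mathbb{R}^{N-1}$ via the interior values); since all norms on $\mathbb{E}_N$ are equivalent this is immaterial for applying Theorem \ref{theorem_Hadamard}. It then remains to check that $T_N$ is $C^{1}$ with everywhere invertible derivative, and that $x\mapsto\Vert T_N x\Vert_{N}$ is coercive.

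For the first point, since $\mathbb{E}_N$ is finite‑dimensional and $T_N$ is the sum of a linear map and, coordinate by coordinate, the composition of the evaluation $x\mapsto x(k)$ with the $C^{1}$ function $f(\tfrac{k}{N},\cdot)$ (assumption \textbf{\textit{C(c)}}), it is immediate that $T_N\in C^{1}$ with $T_N'(x)[\psi](k)=\Delta^{2}\psi(k-1)-\tfrac{1}{N^{2}}f_x(\tfrac{k}{N},x(k))\psi(k)$ for $k\in\mathbb{N}(1,N-1)$. As $T_N'(x)$ is an endomorphism of the $(N-1)$‑dimensional space $\mathbb{E}_N$, its invertibility is equivalent to injectivity, i.e. to showing that the homogeneous discrete problem $\Delta^{2}\xi(k-1)=\tfrac{1}{N^{2}}f_x(\tfrac{k}{N},x(k))\xi(k)$, $\xi(0)=\xi(N)=0$, has only the trivial solution. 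I would test this identity with $\xi(k)$, sum over $k\in\mathbb{N}(1,N-1)$ and apply the summation‑by‑parts formula (\ref{summation_by_parts}) exactly as in the proof of Lemma \ref{lemma_inequalieties} to obtain $-\Vert\xi\Vert_{\Delta_N}^{2}=\tfrac{1}{N^{2}}\sum_{k=1}^{N-1}f_x(\tfrac{k}{N},x(k))|\xi(k)|^{2}\geqslant\tfrac{m}{N^{2}}\Vert\xi\Vert_{N}^{2}$, where $m:=\inf_{[0,1]\times\mathbb{R}}f_x>-1$ by \textbf{\textit{D($f_x$)}}. If $m\geqslant0$ this forces $\xi=\theta_{\mathbb{E}_N}$ at once; if $-1<m<0$, combining with $\Vert\xi\Vert_{N}\leqslant N\Vert\xi\Vert_{\Delta_N}$ from Lemma \ref{lemma_inequalieties} gives $\Vert\xi\Vert_{\Delta_N}^{2}\leqslant(-m)\Vert\xi\Vert_{\Delta_N}^{2}$ with $-m<1$, again forcing $\xi=\theta_{\mathbb{E}_N}$.

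For coercivity I would estimate, applying Minkowski's inequality twice together with \textbf{\textit{D($f$)}}, that $\Vert T_N x\Vert_{N}\geqslant\Vert x\Vert_{\mathbb{E}_N}-\tfrac{1}{N^{2}}\big(A\Vert x\Vert_{N}+B\sqrt{N-1}\big)$, and then, inserting the bound $\Vert x\Vert_{N}\leqslant N^{2}\Vert x\Vert_{\mathbb{E}_N}$ from Lemma \ref{lemma_inequalieties}, conclude that $\Vert T_N x\Vert_{N}\geqslant(1-A)\Vert x\Vert_{\mathbb{E}_N}-B\sqrt{N-1}/N^{2}$, which tends to $+\infty$ as $\Vert x\Vert_{\mathbb{E}_N}\to\infty$ because $A<1$. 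Both hypotheses of Theorem \ref{theorem_Hadamard} then hold, so $T_N$ is a diffeomorphism; in particular the equation $T_N x=w_N$, i.e. problem (\ref{discretization}), has exactly one solution, which is the claim.

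I do not expect a genuine obstacle here, since the finite dimension removes the analytic difficulties of Section~3 (no $C^{1}$‑of‑a‑Nemytskii‑type argument, no Palais--Smale compactness, and injectivity in place of the variational solvability of the linearised problem). The only thing requiring care is the bookkeeping of the $N$‑dependent constants: one must verify that the factor $1/N^{2}$ attached to $f$ and $f_x$ in the discretization is exactly compensated by the $N^{2}$ (respectively $N$) appearing in the discrete Poincaré‑type inequalities of Lemma \ref{lemma_inequalieties}, which is precisely why the thresholds in \textbf{\textit{D($f$)}} and \textbf{\textit{D($f_x$)}} are $A<1$ and $\inf f_x>-1$ rather than the $\pi^{2}$‑thresholds of the continuous Theorem \ref{theorem_solvability_of_continuous}.
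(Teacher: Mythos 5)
Your proposal is correct and follows the same overall skeleton as the paper: you work with the same discrete solution operator (the paper calls it $D_N$), prove it is $C^{1}$, check coercivity of $x\mapsto\Vert D_Nx\Vert_{N}$ exactly as in Lemma \ref{lemma_D_is_coercive}, and then apply Hadamard's Theorem \ref{theorem_Hadamard}. The one place where you genuinely diverge is the invertibility of $D_N'(x)$ (Lemma \ref{lemma_D'_is_invertible}): you note that on the $(N-1)$-dimensional space $\mathbb{E}_N$ invertibility of the linear derivative reduces to injectivity, and you dispose of the homogeneous linearised problem by testing with $\xi$, summing by parts via (\ref{summation_by_parts}) and invoking $\Vert\xi\Vert_{N}\leqslant N\Vert\xi\Vert_{\Delta_N}$ from Lemma \ref{lemma_inequalieties}, which closes precisely because $\inf_{[0,1]\times\mathbb{R}}f_x>-1$; your case split on the sign of $m$ is sound. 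The paper instead mirrors its continuous argument: it introduces the quadratic functional $\Phi_N$ on $\left(\mathbb{E}_N,\Vert\cdot\Vert_{\Delta_N}\right)$, shows it is coercive and strictly convex under \textbf{D($f_x$)}, and gets, for every right-hand side $a$, a unique critical point, i.e. bijectivity of $D_N'(x)$ directly. Your route is more elementary (linear algebra plus the discrete Poincar\'e-type estimate, no variational machinery), while the paper's route keeps the discrete proof structurally parallel to the continuous one (Lemmas \ref{J_is_coercive_l.s.c.} and \ref{lemma_invertible}) and proves solvability of the linearised equation without relying on the finite-dimensional shortcut. Your coercivity estimate and the bookkeeping of the $N$-dependent constants (the $1/N^{2}$ compensated by the $N^{2}$, respectively $N$, of Lemma \ref{lemma_inequalieties}) coincide with the paper's.
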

Here we present some examples of functions satisfying conditions given above.
\begin{ex}
Let $f_{1},f_{2},f_{3}:[0,1]\times \mathbb{R}\rightarrow \mathbb{R}$ be given by the following formulas
\begin{itemize}
\item $f_1(t,x)=\dfrac{t+\sin(x)}{2x^2+4},$
\item $f_2(t,x)=x e^{t-\pi}-\arctan(x)+e^t,$
\item $f_3(t,x)=\dfrac{x^3+x^2-x}{2x^2+5}+t^3-\sin(t).$
\end{itemize}
\end{ex}
To reach our main result we use operator $D_N:\left(\mathbb{E}_N,\|\cdot\|_{\mathbb{E}_N}\right)\rightarrow \left(\mathbb{E}_N,\|\cdot\|_N\right)$
defined by 
\begin{equation*}
(D_Nx)(k):=\left\{ 
\begin{array}{ll}
\Delta ^{2}x(k-1)-\frac{1}{N^{2}}f\left( \frac{k}{N},x(k)\right) & \text{ for }k=1,2,...,N-1, \\ 
0 & \text{ for }k=0,k=N.
\end{array}
\right.
\end{equation*}
\subsection{Proof of the main result }
Obviously, if we prove that $D_{N}$ is diffeomorphism we also prove Theorem \ref{theorem_solvability_of_discretization}. We employ similar scheme as before. Therefore, firstly we show that $D_{N}$ is $C^{1}$.
\begin{lm}
\label{lemma_D_in_C^1} Assume that \textbf{\textit{C(c)}} is satisfied. Then operator $D_N$ is $C^1$. Moreover, its directional derivative is given by following formula 
\begin{equation*}
D_N^{\prime }(x;h)(k)=\left\{
\begin{array}{ll}
\Delta ^{2}h(k-1)-\frac{1}{N^{2}}f_x\left( \tfrac{k}{N},x(k)\right) h(k) & \text{ for }k=1,2,...,N-1, \\ 
0 & \text{ for }k=0,N.
\end{array}
\right.
\end{equation*}
\end{lm}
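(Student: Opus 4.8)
The plan is to mimic the proof of Lemma \ref{lemma_T'}, but the situation is considerably simpler because $\mathbb{E}_N$ is finite-dimensional (of dimension $N-1$, via the identification $x\mapsto(x(1),\dots,x(N-1))$), so we never need to distinguish Gâteaux from Fréchet differentiability, and, by Lemma \ref{lemma_inequalieties}, the particular choice of norms on the domain and codomain is irrelevant to the $C^1$ property.

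First I would fix $x,h\in\mathbb{E}_N$ and compute the directional derivative coordinatewise. For $k=0$ and $k=N$ both $D_N(x+\tau h)$ and the claimed derivative vanish identically, so there is nothing to check. For $k\in\mathbb{N}(1,N-1)$ the map $\tau\mapsto(D_N(x+\tau h))(k)=\Delta^{2}x(k-1)+\tau\Delta^{2}h(k-1)-\tfrac{1}{N^{2}}f\big(\tfrac{k}{N},x(k)+\tau h(k)\big)$ is differentiable at $\tau=0$: the affine part is trivial, and $\tau\mapsto f\big(\tfrac{k}{N},x(k)+\tau h(k)\big)$ is differentiable by \textbf{\textit{C(c)}} with value $f_x\big(\tfrac{k}{N},x(k)\big)h(k)$ at $\tau=0$. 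This yields exactly the stated formula for $D_N'(x;h)(k)$.

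Next I would observe that $h\mapsto D_N'(x;h)$ is linear (each coordinate is a linear combination of the values $h(k-1),h(k),h(k+1)$) and, being linear between finite-dimensional spaces, automatically bounded; hence $D_N'(x)\in\mathcal{L}(\mathbb{E}_N,\mathbb{E}_N)$ is the Gâteaux differential of $D_N$ at $x$. Finally, to upgrade to $C^1$, I would check that $x\mapsto D_N'(x)$ is continuous: if $x_n\to x_0$ in $\mathbb{E}_N$ then $x_n(k)\to x_0(k)$ for each $k$, so by continuity of $f_x$ we get $f_x\big(\tfrac{k}{N},x_n(k)\big)\to f_x\big(\tfrac{k}{N},x_0(k)\big)$; since the linear part of $D_N'(x)$ does not depend on $x$, this gives $D_N'(x_n)\to D_N'(x_0)$ in $\mathcal{L}(\mathbb{E}_N,\mathbb{E}_N)$ (all norms on this finite-dimensional space being equivalent, one may simply read off convergence of the finitely many matrix entries). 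By the remarks in Section 2 on Gâteaux differentiability this shows $D_N\in C^1$.

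There is no genuine obstacle here: the only points requiring care are the bookkeeping at the boundary indices $k=0,N$ and the explicit appeal to finite-dimensionality to sidestep the Gâteaux-versus-Fréchet issue and to make boundedness and continuity of the derivative automatic — all routine.
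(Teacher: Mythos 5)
Your proposal is correct and follows essentially the same route as the paper: compute the directional derivative coordinatewise, note linearity of $h\mapsto D_N'(x;h)$, and obtain continuity of $x\mapsto D_N'(x)$ from the continuity of $f_x$ together with coordinatewise convergence $x_n(k)\to x_0(k)$. The only cosmetic difference is that the paper carries out an explicit operator-norm estimate (using Lemma \ref{lemma_inequalieties} to bound $\tfrac{1}{N^{2}}\Vert h\Vert _{N}$ by $\Vert h\Vert _{\mathbb{E}_{N}}$), whereas you invoke finite-dimensionality and equivalence of norms to the same effect.
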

\begin{proof}
Fix $x,h\in \mathbb{E}_{N}$. Calculating $\left.\dfrac{d}{d\tau }D_{N}(x+\tau h)\right|_{\tau=0}$ we obtain formula given in assertion. Now let $x_{n}\xrightarrow{\E_N}x_{0}$. Hence 
\begin{align*}
\Vert D_{N}^{\prime }x_{n}-D_{N}^{\prime }x_{0}\Vert _{\mathcal{L}\left(\mathbb{E}_{N},\mathbb{E}_{N}\right) }& =\sup_{\Vert h\Vert _{\mathbb{E}_{N}}=1}\left( \sum_{k=1}^{N-1}\left\vert \tfrac{1}{N^{2}}f_{x}\left( \tfrac{k}{N},x_{0}(k)\right) h(k)-\tfrac{1}{N^{2}}f_{x}\left( \tfrac{k}{N},x_{n}(k)\right) h(k)\right\vert ^{2}\right) ^{\frac{1}{2}} \\
& \leqslant \max_{k\in \mathbb{N}(0,N)}\left\vert f_{x}\left( \tfrac{k}{N},x_{0}(k)\right) -f_{x}\left( \tfrac{k}{N},x_{n}(k)\right)\right\vert\sup_{\Vert h\Vert _{\mathbb{E}_{N}}=1}\tfrac{1}{N^{2}}\Vert h\Vert _{N} \\
& \leqslant\max_{k\in \mathbb{N}(0,N)}\left\vert f_{x}\left( \tfrac{k}{N},x_{0}(k)\right) -f_{x}\left( \tfrac{k}{N},x_{n}(k)\right) \right\vert\rightarrow 0\text{ as }n\rightarrow \infty ,
\end{align*}
where the above relation holds since we have that $x_{n}(k)\rightarrow x_{0}(k)$ for every $k=0,1,...,N$ and since $f_{x}$ is continuous.
\end{proof}
Analogously as with the continuous case, we have to verify the assumptions of Theorem \ref{theorem_Hadamard}.
\begin{lm}
\label{lemma_D'_is_invertible} Assume that \textbf{\textit{D($f_{x}$)}} is satisfied. Then operator $D'_{N}x$ is invertible at
every $x\in\mathbb{E}_{N}$.
\end{lm}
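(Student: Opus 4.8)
Since $\mathbb{E}_N$ is finite-dimensional and, for fixed $x$, the mapping $D_N'(x)$ is a linear endomorphism of $\mathbb{E}_N$, it is invertible if and only if it is injective; so the plan is to show that $\ker D_N'(x)=\{\theta_{\mathbb{E}_N}\}$. By the formula in Lemma \ref{lemma_D_in_C^1}, an element $h\in\mathbb{E}_N$ lies in $\ker D_N'(x)$ precisely when
\begin{equation*}
\Delta^2 h(k-1)=\tfrac{1}{N^2}f_x\left(\tfrac{k}{N},x(k)\right)h(k)\quad\text{for }k\in\mathbb{N}(1,N-1),\qquad h(0)=h(N)=0.
\end{equation*}
I would fix such an $h$ and aim to conclude that $h=\theta_{\mathbb{E}_N}$.

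Put $c:=\inf\limits_{[0,1]\times\mathbb{R}}f_x$. The heart of the argument is a discrete energy identity, the analogue of testing the equation with the solution itself: multiply the displayed relation by $h(k)$ and sum over $k=1,\dots,N-1$. On the left-hand side I would invoke the summation-by-parts formula (\ref{summation_by_parts}) with $a_k=\Delta h(k-1)$, $b_k=h(k-1)$ and $m=N$, exactly as in the proof of Lemma \ref{lemma_inequalieties}; the boundary terms vanish because $h(0)=h(N)=0$, giving $\sum_{k=1}^{N-1}\Delta^2 h(k-1)h(k)=-\sum_{k=1}^{N}|\Delta h(k-1)|^2=-\|h\|_{\Delta_N}^2$. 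Hence
\begin{equation*}
\|h\|_{\Delta_N}^2=-\tfrac{1}{N^2}\sum_{k=1}^{N-1}f_x\left(\tfrac{k}{N},x(k)\right)|h(k)|^2\leqslant-\tfrac{c}{N^2}\|h\|_N^2,
\end{equation*}
where the last inequality uses only the pointwise bound $f_x(\tfrac{k}{N},x(k))\geqslant c$.

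To close I would combine this with the estimate $\|h\|_N\leqslant N\|h\|_{\Delta_N}$ from Lemma \ref{lemma_inequalieties}, equivalently $\|h\|_{\Delta_N}^2\geqslant\tfrac{1}{N^2}\|h\|_N^2$. Together these yield $\tfrac{1}{N^2}\|h\|_N^2\leqslant-\tfrac{c}{N^2}\|h\|_N^2$, that is $(1+c)\|h\|_N^2\leqslant 0$. Assumption \textbf{\textit{D($f_{x}$)}} gives $c>-1$, hence $1+c>0$, and therefore $\|h\|_N=0$; since also $h(0)=h(N)=0$ this forces $h=\theta_{\mathbb{E}_N}$. Consequently $\ker D_N'(x)=\{\theta_{\mathbb{E}_N}\}$ and $D_N'(x)$ is invertible for every $x\in\mathbb{E}_N$.

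I do not expect a genuine obstacle here: this is the discrete counterpart of the Poincaré/energy estimate already used for the continuous problem. The only delicate points are the bookkeeping of the boundary terms in the summation by parts, which disappear precisely because $h$ vanishes at $0$ and at $N$, and the observation that the scaling factor $N^{-2}$ built into the definition of $D_N$ is exactly matched by the constant in $\|h\|_{\Delta_N}^2\geqslant N^{-2}\|h\|_N^2$, so that $N$ cancels and the critical constant is $-1$; it is the strict inequality $c>-1$ in \textbf{\textit{D($f_{x}$)}} that rules out a nontrivial kernel element.
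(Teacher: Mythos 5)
Your proof is correct, but it follows a genuinely different route from the paper. The paper argues variationally, mirroring the continuous case (Lemma \ref{lemma_invertible}): it fixes an arbitrary right-hand side $a\in\mathbb{E}_N$, introduces the discrete action functional $\Phi_N(h)=\tfrac12\sum_{k=1}^{N}|\Delta h(k-1)|^2+\tfrac{1}{2N^2}\sum_{k=1}^{N-1}f_x\bigl(\tfrac{k}{N},x(k)\bigr)|h(k)|^2+\sum_{k=1}^{N-1}h(k)a(k)$, proves it is coercive and strictly convex using $\inf f_x\geqslant B>-1$ together with $\|h\|_N\leqslant N\|h\|_{\Delta_N}$, and concludes that its unique critical point is the unique solution of $(D_N'x)h=a$; since $a$ was arbitrary this gives bijectivity directly. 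You instead exploit finite-dimensionality: $D_N'(x)$ is an endomorphism of $\mathbb{E}_N$, so by rank--nullity injectivity suffices, and you kill the kernel with the discrete energy identity $\sum_{k=1}^{N-1}\Delta^2h(k-1)h(k)=-\|h\|_{\Delta_N}^2$ (the same summation-by-parts computation as in Lemma \ref{lemma_inequalieties}) combined with $f_x\geqslant c>-1$ and $\|h\|_N^2\leqslant N^2\|h\|_{\Delta_N}^2$. The quantitative ingredients are identical in both arguments --- the discrete Poincar\'e-type inequality and the lower bound on $f_x$ with critical constant $-1$ --- but your version is shorter and more elementary, while the paper's variational version constructs the preimage by minimization and thus yields surjectivity without any appeal to equality of dimensions, which is the form of argument that survives in the infinite-dimensional (continuous) setting.
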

\begin{proof}
Let us fix $a,x\in \mathbb{E}_{N}$ and define a functional $\Phi _{N}:\left(\mathbb{E}_{N},\Vert \cdot \Vert _{\Delta _{N}}\right) \rightarrow \mathbb{R}$ given by 
\begin{equation*}
\Phi _{N}(h):=\tfrac{1}{2}\sum\limits_{k=1}^{N}|\Delta h(k-1)|^{2}+\tfrac{1}{2N^{2}}\sum\limits_{k=1}^{N-1}f_{x}\left( \tfrac{k}{N},x(k)\right)|h(k)|^{2}+\sum\limits_{k=1}^{N-1}h(k)a(k).
\end{equation*}
Observe that $\Phi _{N}$ is $C^{1}$ functional. Using our assumption we obtain existence of some negative constant $B>-1$ such that $\inf\limits_{[0,1]\times \mathbb{R}}f_x\geqslant B$. Hence 
\begin{align*}
\Phi _{N}(h)& =\tfrac{1}{2}\sum\limits_{k=1}^{N}|\Delta h(k-1)|^{2}+\tfrac{1}{2N^{2}}\sum\limits_{k=1}^{N-1}f_{x}\left( \tfrac{k}{N},x(k)\right)|h(k)|^{2}+\sum\limits_{k=1}^{N-1}h(k)a(k) \\
& \geqslant \tfrac{1}{2}\sum\limits_{k=1}^{N}|\Delta h(k-1)|^{2}+\tfrac{B}{2N^{2}}\sum\limits_{k=1}^{N-1}|h(k)|^{2}+\sum\limits_{k=1}^{N-1}h(k)a(k) \\
& \geqslant \tfrac{1+B}{2}\Vert h\Vert _{\Delta _{N}}^{2}-N\Vert a\Vert_{N}\Vert h\Vert _{\Delta _{N}}.
\end{align*}
Hence $\Phi _{N}$ is coercive. Now we show that $\Phi _{N}$ is strictly convex as a sum of linear function $h\mapsto\sum\limits_{k=1}^{N-1}h(k)a(k)$ and strictly convex $\Phi_{N_{1}}(h):=\sum\limits_{k=1}^{N}|\Delta h(k-1)|^{2}+\frac{1}{N^{2}}\sum\limits_{k=1}^{N-1}f_{x}\left( \frac{k}{N},x(k)\right) |h(k)|^{2}$ multiplied by positive integer. Fix $\lambda \in (0,1)$ and take $u,w\in\mathbb{E}_{N}$ such that $u\neq w$. Then 
\begin{align*}
(1-& \lambda )\Phi _{N_{1}}(u)+\lambda \Phi _{N_{1}}(w)-\Phi _{N_{1}}\left((1-\lambda )u+\lambda w\right)  \\
& =\left( \lambda -\lambda ^{2}\right) \sum\limits_{k=1}^{N}|\Delta u(k-1)-\Delta w(k-1)|^{2}+\tfrac{\lambda -\lambda ^{2}}{N^{2}}\sum\limits_{k=1}^{N-1}f_{x}\left( \tfrac{k}{N},x(k)\right) |u(k)-w(k)|^{2}\\
& \geqslant \left( \lambda -\lambda ^{2}\right) \Vert u-w\Vert^2_{\Delta_{N}}+\tfrac{B\left( \lambda -\lambda ^{2}\right) }{N^{2}}\Vert u-w\Vert^2_{N}\geqslant \left( \lambda -\lambda ^{2}\right) (1+B)\Vert u-w\Vert^2_{\Delta _{N}}>0.
\end{align*}
Since $\Phi _{N}$ is coercive and strictly convex, it possess only one critical point, which means that there is only one $h^{\star }\in\E_N$ such that 
\begin{equation*}
\sum\limits_{k=1}^{N-1}\Delta ^{2}h^{\star}(k-1)\psi (k)=\tfrac{1}{N^{2}}\sum\limits_{k=1}^{N-1}f_{x}\left( \tfrac{k}{N},x(k)\right) h^{\star }(k)\psi (k)+\sum\limits_{k=1}^{N-1}a(k)\psi (k)
\end{equation*}
for every $\psi \in \mathbb{E}_{N}$. Therefore there is a unique $h^{\star }$ such that $(D_{N}^{\prime}x)(h^\star)=a$. Since $a$ and $x$ were taken arbitrarily, we get assertion.
\end{proof}
\begin{lm}
\label{lemma_D_is_coercive} Assume that \textbf{\textit{D($f$)}} is satisfied. Then functional $\mathbb{E}\ni x\mapsto \Vert D_{N}x\Vert_N $ is coercive.
\end{lm}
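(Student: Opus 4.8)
The goal is to show that the functional $x\mapsto\|D_Nx\|_N$ is coercive on $\bigl(\mathbb{E}_N,\|\cdot\|_{\mathbb{E}_N}\bigr)$, i.e. that $\|D_Nx\|_N\to\infty$ as $\|x\|_{\mathbb{E}_N}\to\infty$. The natural approach mimics the proof of Lemma~\ref{lemma_varphi_is_coercive} in the continuous case: estimate $\|D_Nx\|_N$ from below by the ``principal part'' $\|\Delta^2x(\cdot-1)\|_N=\|x\|_{\mathbb{E}_N}$ minus the contribution of the nonlinearity, and then absorb the latter using the growth condition \textbf{\textit{D($f$)}} together with the chain of inequalities from Lemma~\ref{lemma_inequalieties}.

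\textbf{Key steps.} First I would fix $x\in\mathbb{E}_N$ and apply the (reverse) triangle inequality in the $\|\cdot\|_N$ norm to the defining formula of $D_N$:
\begin{equation*}
\|D_Nx\|_N=\left(\sum_{k=1}^{N-1}\Bigl|\Delta^2x(k-1)-\tfrac{1}{N^2}f\bigl(\tfrac{k}{N},x(k)\bigr)\Bigr|^2\right)^{\frac12}\geqslant\|x\|_{\mathbb{E}_N}-\tfrac{1}{N^2}\left(\sum_{k=1}^{N-1}\bigl|f\bigl(\tfrac{k}{N},x(k)\bigr)\bigr|^2\right)^{\frac12}.
\end{equation*}
Next, using \textbf{\textit{D($f$)}}, bound $|f(\tfrac{k}{N},x(k))|\leqslant A|x(k)|+B$, so that the Minkowski inequality gives $\bigl(\sum_{k=1}^{N-1}|f(\tfrac kN,x(k))|^2\bigr)^{1/2}\leqslant A\|x\|_N+B\sqrt{N-1}\leqslant A\|x\|_N+B\sqrt N$. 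Then I would use Lemma~\ref{lemma_inequalieties} — specifically $\|x\|_N\leqslant\sqrt N\|x\|_{\infty_N}\leqslant N\|x\|_{\Delta_N}\leqslant N^2\|x\|_{\mathbb{E}_N}$ — to replace $\|x\|_N$ by $N^2\|x\|_{\mathbb{E}_N}$, obtaining
\begin{equation*}
\|D_Nx\|_N\geqslant\|x\|_{\mathbb{E}_N}-\tfrac{1}{N^2}\bigl(AN^2\|x\|_{\mathbb{E}_N}+B\sqrt N\bigr)=(1-A)\|x\|_{\mathbb{E}_N}-\tfrac{B}{N^{3/2}}.
\end{equation*}
Since $A<1$ by assumption, the coefficient $1-A$ is strictly positive, hence the right-hand side tends to $+\infty$ as $\|x\|_{\mathbb{E}_N}\to\infty$, which is exactly coercivity.

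\textbf{Main obstacle.} There is no deep difficulty here; the only point requiring care is choosing the right inequality from Lemma~\ref{lemma_inequalieties} to pass from the $\|\cdot\|_N$ norm appearing in the estimate of the nonlinear term back to the $\|\cdot\|_{\mathbb{E}_N}$ norm in which coercivity is asserted, and checking that the extra factor $\tfrac{1}{N^2}$ in the definition of $D_N$ exactly cancels the $N^2$ lost in that comparison so that the surviving constant is the scale-free $1-A$. This cancellation is precisely the discrete analogue of the Poincaré-type constant $\pi^2$ in the continuous case, and it is why the hypothesis is $A<1$ here rather than $A<\pi^2$.
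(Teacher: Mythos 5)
Your proof is correct and follows essentially the same route as the paper: the reverse triangle inequality in $\|\cdot\|_N$, the growth bound from \textbf{\textit{D($f$)}} via Minkowski, and the comparison $\|x\|_N\leqslant N^2\|x\|_{\mathbb{E}_N}$ from Lemma \ref{lemma_inequalieties} to arrive at $\|D_Nx\|_N\geqslant(1-A)\|x\|_{\mathbb{E}_N}-\tfrac{B}{N^{3/2}}$, which is exactly the paper's estimate. No gaps to report.
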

\begin{proof}
Fix $x\in\mathbb{E}_N$. Using assumption we obtain
\begin{align*}
\Vert D_{N}x\Vert_N & \geqslant \left( \sum\limits_{k=1}^{N-1}\left\vert\Delta ^{2}x(k-1)-\tfrac{1}{N^{2}}f\left( \tfrac{k}{N},x(k)\right)\right\vert ^{2}\right) ^{\frac{1}{2}} \\
& \geqslant \left( \sum\limits_{k=1}^{N-1}\left\vert \Delta^{2}x(k-1)\right\vert ^{2}\right) ^{\frac{1}{2}}-\tfrac{1}{N^2}\left( \sum\limits_{k=1}^{N-1}\left\vert Ax(k)+B\right\vert ^{2}\right) ^{\frac{1}{2}}\\
&\geqslant \Vert x\Vert _{\mathbb{E}}-\tfrac{A}{N^{2}}\Vert x\Vert_N -\tfrac{B}{N^{\frac{3}{2}}}\geqslant (1-A)\Vert x\Vert _{\mathbb{E}}-\tfrac{B}{N^{\frac{3}{2}}}.
\end{align*}
Hence $x\mapsto \|D_Nx\|_N$ is coercive.
\end{proof}
Finally we obtain following theorem
\begin{tw}
Assume that \textbf{\textit{C(c)}}, \textbf{\textit{D($f$)}} and \textbf{\textit{D($f_x$)}} are satisfied. Then operator $D_N$ is diffeomorphism.
\end{tw}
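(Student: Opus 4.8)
The plan is to apply the finite-dimensional global inversion result, Theorem~\ref{theorem_Hadamard}, to $D_N$, mirroring exactly the argument that gave Theorem~\ref{theorem_T_is_diffeomorphism} in the continuous case. The first point to record is that $\mathbb{E}_N$ is finite-dimensional: the map $x\mapsto(x(1),\dots,x(N-1))$ is a linear isomorphism of $\mathbb{E}_N$ onto $\mathbb{R}^{N-1}$. Each of the norms $\Vert\cdot\Vert_{\mathbb{E}_N}$ and $\Vert\cdot\Vert_N$ is induced by an inner product on $\mathbb{E}_N$, so both the domain $(\mathbb{E}_N,\Vert\cdot\Vert_{\mathbb{E}_N})$ and the codomain $(\mathbb{E}_N,\Vert\cdot\Vert_N)$ are finite-dimensional Hilbert spaces; moreover, by Lemma~\ref{lemma_inequalieties} these norms are equivalent, so the particular choice of norm is irrelevant for the $C^1$ and invertibility statements.

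Next I would simply assemble the three ingredients already in hand. By Lemma~\ref{lemma_D_in_C^1}, assumption \textbf{\textit{C(c)}} makes $D_N$ a $C^1$ mapping, with the directional derivative given there. By Lemma~\ref{lemma_D'_is_invertible}, assumption \textbf{\textit{D($f_x$)}} ensures that $D_N^{\prime}(x)$ is invertible at every $x\in\mathbb{E}_N$ (the spaces being finite-dimensional of equal dimension, surjectivity is then automatic). Finally, by Lemma~\ref{lemma_D_is_coercive}, assumption \textbf{\textit{D($f$)}} yields coercivity of $x\mapsto\Vert D_N x\Vert_N$ on $\mathbb{E}_N$. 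These are precisely the two bullet hypotheses of Theorem~\ref{theorem_Hadamard}, so that theorem applies and $D_N$ is a diffeomorphism. In particular $D_N$ is a bijection, so problem (\ref{discretization}) — which is just $D_N x=\tfrac{1}{N^2}v(\tfrac{\cdot}{N})$ on $\mathbb{N}(1,N-1)$ — has exactly one solution $x_N\in\mathbb{E}_N$, establishing Theorem~\ref{theorem_solvability_of_discretization}, while continuity of $D_N^{-1}$ gives the continuous dependence of $x_N$ on $v$.

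Honestly there is no real obstacle remaining: all the analytic content sits in Lemmas~\ref{lemma_D_in_C^1}--\ref{lemma_D_is_coercive}. The only thing worth emphasising is how finite-dimensionality simplifies matters compared with the continuous case: it lets us invoke Hadamard's theorem instead of the Idczak--Skowron--Walczak theorem, thereby replacing the Palais--Smale verification by the cheap coercivity check of Lemma~\ref{lemma_D_is_coercive}, and it upgrades injectivity of $D_N^{\prime}(x)$ to full invertibility. (Should one prefer to route through Theorem~\ref{theorem_Idczak}, one would note that on a finite-dimensional space a $C^1$ functional which is coercive automatically satisfies the PS condition, so applying Lemma~\ref{lemma_D_is_coercive} to $h\mapsto\tfrac12\Vert D_N h-y\Vert_N^2$ would work equally well; but Theorem~\ref{theorem_Hadamard} is the cleaner instrument here.)
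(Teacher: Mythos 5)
Your proposal is correct and follows exactly the paper's own route: it combines Lemma~\ref{lemma_D_in_C^1} ($C^1$), Lemma~\ref{lemma_D'_is_invertible} (invertibility of $D_N'(x)$) and Lemma~\ref{lemma_D_is_coercive} (coercivity of $x\mapsto\Vert D_Nx\Vert_N$) and invokes the Hadamard theorem, Theorem~\ref{theorem_Hadamard}. The extra observations on finite-dimensionality and the optional detour via Theorem~\ref{theorem_Idczak} are sound but not needed beyond what the paper does.
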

\begin{proof}
Using Theorem \ref{theorem_Hadamard}, Lemmas \ref{lemma_D_in_C^1}, \ref{lemma_D'_is_invertible} and \ref{lemma_D_is_coercive} we get assertion.
\end{proof}
Theorem formulated above proves Theorem \ref{theorem_solvability_of_discretization} and consequently (\ref{discretization}) has a unique solution.
\subsection{The convergence results}
Let $x_{N}$ denote solution of problem (\ref{discretization}) for any fixed $N\in \mathbb{N}$, $N\geqslant 2$, and let $x^{\star }$ denotes solution of problem (\ref{continuous_problem}). In this section we will show that sequence $(x_{N})_{N\in \mathbb{N}}$ converge to solution of (\ref{continuous_problem}). We will do this using results from \cite{gaines} and \cite{kellypeterson}. Therefore our main goal is to prove following theorem.
\begin{tw}
\label{theorem_non-spurious_solutions} Assume that \textbf{\textit{C(c)}}, \textbf{\textit{D($f$)}}, \textbf{\textit{D($f_{x}$)}} and 
\begin{itemize}
\item \textbf{\textit{D($v$)}}: function $v$ is continuous,
\end{itemize}
are satisfied. Then the sequence $\left(x_{N}\right)_{N\in\N} $ of solutions of problem (\ref{discretization}) converges to the solution $x^{\star }$ of (\ref{continuous_problem}) in the sense described by (\ref{convergence_of_discretizations}).
\end{tw}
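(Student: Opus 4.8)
The plan is to control $x_N$ by comparing it with the sampling of the continuous solution on the grid $\{k/N\}$, closing the comparison by a discrete ``energy'' (testing) argument that uses nothing beyond the summation-by-parts identity and the norm inequalities already recorded in Lemma \ref{lemma_inequalieties}. Observe first that, since $1<\pi^{2}$, assumptions \textbf{\textit{D($f$)}} and \textbf{\textit{D($f_{x}$)}} are stronger than \textbf{\textit{C($f$)}} and \textbf{\textit{C($f_{x}$)}}, so Theorems \ref{theorem_solvability_of_continuous} and \ref{theorem_solvability_of_discretization} apply and $x^{\star}$, $x_{N}$ are well defined and unique. Moreover, since $v$ is continuous (assumption \textbf{\textit{D($v$)}}), the function $t\mapsto f(t,x^{\star}(t))+v(t)=\ddot{x}^{\star}(t)$ is continuous, hence $x^{\star}\in C^{2}[0,1]$ and $\ddot{x}^{\star}$ is uniformly continuous on $[0,1]$; let $\omega$ denote its modulus of continuity.

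Next I would establish consistency. Fix $N$, write $t_{k}:=k/N$, and define $\bar{x}_{N}\in\mathbb{E}_{N}$ by $\bar{x}_{N}(k):=x^{\star}(t_{k})$, so that $\bar{x}_{N}(0)=\bar{x}_{N}(N)=0$. A second-order Taylor expansion of $h\mapsto x^{\star}(t_{k}+h)-2x^{\star}(t_{k})+x^{\star}(t_{k}-h)$ at $h=1/N$ yields $\big|N^{2}\Delta^{2}\bar{x}_{N}(k-1)-\ddot{x}^{\star}(t_{k})\big|\leqslant\omega(1/N)$, and since $\ddot{x}^{\star}(t_{k})=f(t_{k},\bar{x}_{N}(k))+v(t_{k})$ this reads
\[
\Delta^{2}\bar{x}_{N}(k-1)=\tfrac{1}{N^{2}}f\!\left(\tfrac{k}{N},\bar{x}_{N}(k)\right)+\tfrac{1}{N^{2}}v\!\left(\tfrac{k}{N}\right)+\tfrac{1}{N^{2}}\rho_{N}(k),
\]
where $\mu_{N}:=\max_{k\in\mathbb{N}(1,N-1)}|\rho_{N}(k)|\leqslant\omega(1/N)\to0$ as $N\to\infty$; in other words $\bar{x}_{N}$ solves (\ref{discretization}) with a right-hand side perturbed by a uniformly vanishing amount.

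Then I would set $e_{N}:=x_{N}-\bar{x}_{N}\in\mathbb{E}_{N}$, subtract the exact equation for $x_{N}$ from the perturbed equation for $\bar{x}_{N}$, and use $f(t_{k},x_{N}(k))-f(t_{k},\bar{x}_{N}(k))=f_{x}(t_{k},\eta_{k})e_{N}(k)$ (mean value theorem) to get $\Delta^{2}e_{N}(k-1)=\tfrac{1}{N^{2}}f_{x}(t_{k},\eta_{k})e_{N}(k)-\tfrac{1}{N^{2}}\rho_{N}(k)$. Testing this against $e_{N}$, i.e.\ multiplying by $e_{N}(k)$ and summing over $k\in\mathbb{N}(1,N-1)$, and invoking the identity $\sum_{k=1}^{N-1}\Delta^{2}e_{N}(k-1)e_{N}(k)=-\|e_{N}\|_{\Delta_{N}}^{2}$ from the proof of Lemma \ref{lemma_inequalieties}, I obtain
\[
\|e_{N}\|_{\Delta_{N}}^{2}+\tfrac{1}{N^{2}}\sum_{k=1}^{N-1}f_{x}(t_{k},\eta_{k})|e_{N}(k)|^{2}=\tfrac{1}{N^{2}}\sum_{k=1}^{N-1}\rho_{N}(k)e_{N}(k).
\]
With $\beta:=\inf_{[0,1]\times\mathbb{R}}f_{x}>-1$, using $f_{x}(t_{k},\eta_{k})\geqslant\beta$ and $\|e_{N}\|_{N}\leqslant N\|e_{N}\|_{\Delta_{N}}$ (Lemma \ref{lemma_inequalieties}) the left-hand side is at least $c\|e_{N}\|_{\Delta_{N}}^{2}$ with $c:=\min\{1,1+\beta\}>0$, while by the Cauchy--Schwarz inequality, $\|\rho_{N}\|_{N}\leqslant\sqrt{N}\,\mu_{N}$ and again $\|e_{N}\|_{N}\leqslant N\|e_{N}\|_{\Delta_{N}}$ the right-hand side is at most $\tfrac{\mu_{N}}{\sqrt{N}}\|e_{N}\|_{\Delta_{N}}$. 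Hence $\|e_{N}\|_{\Delta_{N}}\leqslant\mu_{N}/(c\sqrt{N})$, and finally, using $\|e_{N}\|_{\infty_{N}}\leqslant\sqrt{N}\,\|e_{N}\|_{\Delta_{N}}$ (once more Lemma \ref{lemma_inequalieties}),
\[
\max_{k\in\mathbb{N}(0,N)}\big|x^{\star}(\tfrac{k}{N})-x_{N}(k)\big|=\|e_{N}\|_{\infty_{N}}\leqslant\frac{\mu_{N}}{c}\longrightarrow0,
\]
which is exactly (\ref{convergence_of_discretizations}).

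The displayed estimates themselves are routine; the part that genuinely requires care is making the energy estimate close, and here the key is structural rather than computational: the threshold $-1$ in \textbf{\textit{D($f_{x}$)}} (just like $A<1$ in \textbf{\textit{D($f$)}}) is calibrated precisely to the crude discrete Poincar\'e inequality $\|e_{N}\|_{N}\leqslant N\|e_{N}\|_{\Delta_{N}}$ provided by Lemma \ref{lemma_inequalieties}, so no sharper spectral information on the discrete Dirichlet Laplacian is needed; a secondary, purely bookkeeping, difficulty is the systematic passage between the four norms $\|\cdot\|_{N}$, $\|\cdot\|_{\Delta_{N}}$, $\|\cdot\|_{\mathbb{E}_{N}}$, $\|\cdot\|_{\infty_{N}}$ in the correct directions, which is exactly what Lemma \ref{lemma_inequalieties} was designed to make possible. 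The same consistency-plus-stability bookkeeping underlies the convergence statements in \cite{gaines} and \cite{kellypeterson}.
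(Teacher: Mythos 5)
Your argument is correct, but it proves the theorem by a genuinely different route than the paper. You argue by consistency plus stability: you compare $x_N$ with the sampled exact solution $\bar x_N(k)=x^{\star}(k/N)$, bound the truncation error by the modulus of continuity of $\ddot x^{\star}$ (legitimate, since \textbf{\textit{D($v$)}} gives $x^{\star}\in C^2[0,1]$), and then close the estimate for $e_N=x_N-\bar x_N$ by testing the error equation against $e_N$, using the summation-by-parts identity $\sum_{k=1}^{N-1}\Delta^2 e_N(k-1)e_N(k)=-\Vert e_N\Vert_{\Delta_N}^2$ and the inequalities $\Vert e_N\Vert_N\leqslant N\Vert e_N\Vert_{\Delta_N}$, $\Vert e_N\Vert_{\infty_N}\leqslant\sqrt{N}\Vert e_N\Vert_{\Delta_N}$ of Lemma \ref{lemma_inequalieties}; the threshold $\inf f_x>-1$ indeed makes the coefficient $\min\{1,1+\beta\}$ positive, and your bookkeeping of the $N$-powers (RHS at most $\mu_N\Vert e_N\Vert_{\Delta_N}/\sqrt N$) checks out, yielding $\Vert e_N\Vert_{\infty_N}\leqslant\mu_N/c\to 0$. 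The paper instead proceeds by an a priori bound and compactness: Lemma \ref{lemma_unifromly_bounded} gives the uniform bound $|x_N(k)|\leqslant M$ via the coercivity estimate of Lemma \ref{lemma_D_is_coercive}, and then Lemmas 9.2 and 9.3 of \cite{kellypeterson} (uniform bound on $N|\Delta x_N(k-1)|$, followed by an Arzel\`a--Ascoli-type convergence result combined with uniqueness of $x^{\star}$) deliver (\ref{convergence_of_discretizations}). Your proof is self-contained (it does not invoke the external lemmas, nor even Lemma \ref{lemma_unifromly_bounded}), uses \textbf{\textit{D($f$)}} only through solvability of (\ref{discretization}), and is quantitative, giving the rate $\max_k|x^{\star}(k/N)-x_N(k)|\leqslant\omega(1/N)/\min\{1,1+\beta\}$; the paper's compactness route is shorter on the page and does not require writing down the error equation, but it yields no rate and leans on cited results whose hypotheses must still be matched by the uniform bounds. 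Both arguments are valid proofs of Theorem \ref{theorem_non-spurious_solutions}.
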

There is some remark in order as concerns the assumptions of both discrete and continuous problem.
\begin{rem}
In \cite{gal} it is shown that when the direct method of the calculus of variation is applied then the constants appearing in the assumption of the continuous problem are inherited by the discrete one, i.e. the discrete problem is solvable with genuinely same assumptions. In our case, the situation is different. As seen from what we have already proved in Theorems \ref{theorem_solvability_of_continuous} and \ref{theorem_solvability_of_discretization} the constants differ since we cannot use the whole range of constants appearing in assumptions \textbf{\textit{C($f$)}} and \textbf{\textit{C($f_{x}$)}}. This is the reason why we use somehow new assumption to reach the convergence result.
\end{rem}
Firstly, we will prove the following lemma
\begin{lm}
\label{lemma_unifromly_bounded} Assume that \textbf{\textit{C(c)}}, \textbf{\textit{D($f$)}}, \textbf{\textit{D($f_x$)}} and \textbf{\textit{D($v$)}} are satisfied. Then there exists a positive constant $M$ such that for every $N\in\mathbb{N}$, $N\geqslant 2$, and every $k\in\mathbb{N}(0,N)$ we have 
\begin{equation}  \label{inequality_solutions_points}
|x_N(k)| \leqslant M.
\end{equation}
\end{lm}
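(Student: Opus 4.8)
The plan is to derive a uniform-in-$N$ a~priori bound on the discrete solutions $x_N$ by exploiting the discrete inequalities of Lemma \ref{lemma_inequalieties} together with the growth hypothesis \textbf{\textit{D($f$)}}. The key point is that $x_N$ satisfies $(D_Nx_N)(k)=-\tfrac{1}{N^2}v\left(\tfrac{k}{N}\right)$ for $k\in\mathbb{N}(1,N-1)$, so that, reading off the definition of $D_N$,
\begin{equation*}
\|x_N\|_{\mathbb{E}_N}=\left(\sum_{k=1}^{N-1}\left|\Delta^2x_N(k-1)\right|^2\right)^{\frac12}=\left(\sum_{k=1}^{N-1}\left|\tfrac{1}{N^2}f\left(\tfrac{k}{N},x_N(k)\right)+\tfrac{1}{N^2}v\left(\tfrac{k}{N}\right)\right|^2\right)^{\frac12}.
\end{equation*}
First I would estimate the right-hand side by the triangle inequality in $\ell^2$, use \textbf{\textit{D($f$)}} to get $\left|f\left(\tfrac{k}{N},x_N(k)\right)\right|\le A|x_N(k)|+B$, and bound $\left|v\left(\tfrac{k}{N}\right)\right|\le\|v\|_\infty$ (finite by \textbf{\textit{D($v$)}}, since $v$ is continuous on the compact $[0,1]$). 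Counting the $N-1$ terms this yields
\begin{equation*}
\|x_N\|_{\mathbb{E}_N}\leqslant\tfrac{A}{N^2}\|x_N\|_N+\tfrac{(B+\|v\|_\infty)\sqrt{N-1}}{N^2}\leqslant\tfrac{A}{N^2}\|x_N\|_N+\tfrac{B+\|v\|_\infty}{N^{3/2}}.
\end{equation*}

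Next I would convert $\|x_N\|_N$ back into $\|x_N\|_{\mathbb{E}_N}$ using the chain $\|x\|_N\le\sqrt{N}\|x\|_{\infty_N}\le N\|x\|_{\Delta_N}\le N^2\|x\|_{\mathbb{E}_N}$ from Lemma \ref{lemma_inequalieties}, so that $\tfrac{1}{N^2}\|x_N\|_N\le\|x_N\|_{\mathbb{E}_N}$. This gives $\|x_N\|_{\mathbb{E}_N}\le A\|x_N\|_{\mathbb{E}_N}+\tfrac{B+\|v\|_\infty}{N^{3/2}}$; since $A<1$ by \textbf{\textit{D($f$)}}, rearranging produces
\begin{equation*}
\|x_N\|_{\mathbb{E}_N}\leqslant\tfrac{B+\|v\|_\infty}{(1-A)N^{3/2}}.
\end{equation*}
Finally, the pointwise bound follows from $|x_N(k)|\le\|x_N\|_{\infty_N}\le\sqrt{N}\|x_N\|_{\Delta_N}\le N^{3/2}\|x_N\|_{\mathbb{E}_N}$ (again Lemma \ref{lemma_inequalieties}), whence $|x_N(k)|\le\tfrac{B+\|v\|_\infty}{1-A}=:M$ for every $N\geqslant2$ and every $k\in\mathbb{N}(0,N)$, which is the claimed estimate \eqref{inequality_solutions_points} with a constant independent of $N$.

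The only subtlety — and the step I would be most careful about — is tracking the powers of $N$ so that they cancel exactly: the factor $\tfrac{1}{N^2}$ in front of $f$ in the discretization is precisely what compensates the $N^{3/2}$ (or $N^2$) losses incurred when passing between the norms $\|\cdot\|_{\mathbb{E}_N}$, $\|\cdot\|_{\Delta_N}$, $\|\cdot\|_N$ and $\|\cdot\|_{\infty_N}$, and it is essential that \textbf{\textit{D($f$)}} requires $A<1$ (rather than $A<\pi^2$ as in the continuous \textbf{\textit{C($f$)}}) so that $1-A>0$ after the rearrangement. I would also note that $M$ can be taken as $\tfrac{B+\|v\|_\infty}{1-A}$ explicitly, which will be convenient for the subsequent convergence argument of Theorem \ref{theorem_non-spurious_solutions}.
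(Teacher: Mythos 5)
Your proposal is correct and follows essentially the paper's own route: you obtain the same a priori bound $\Vert x_N\Vert_{\mathbb{E}_N}\leqslant\tfrac{B+\Vert v\Vert_\infty}{(1-A)N^{3/2}}$ (the paper gets it by combining $\Vert D_Nx_N\Vert_N\leqslant\Vert v\Vert_\infty N^{-3/2}$ with the coercivity estimate of Lemma \ref{lemma_D_is_coercive}, which you simply re-derive inline) and then pass to $\Vert x_N\Vert_{\infty_N}\leqslant N^{3/2}\Vert x_N\Vert_{\mathbb{E}_N}$ via Lemma \ref{lemma_inequalieties}, arriving at the identical constant $M=\tfrac{B+\Vert v\Vert_\infty}{1-A}$. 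The only blemish is the sign in $(D_Nx_N)(k)=\tfrac{1}{N^2}v\left(\tfrac{k}{N}\right)$, which you wrote with a minus; this is immaterial since only absolute values enter the estimates.
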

\begin{proof}
Observe that for every fixed $N\in\mathbb{N}$ and every $k\in\mathbb{N}(0,N)$ there is 
\begin{align*}
\left(D_Nx_N\right)(k)=\tfrac{1}{N^2}v\left(\tfrac{k}{N}\right).
\end{align*}
Moreover, since $v$ is continuous, we have 
\begin{align*}
\|D_Nx_N\|_N=\left(\sum_{k=1}^{N-1}\left|(D_Nx_N)(k)\right|^2\right)^\frac{1}{2}=\left(\sum_{k=1}^{N-1}\left|\tfrac{1}{N^{2}}v\left(\tfrac{k}{N}\right)\right|^2\right)^\frac{1}{2} \leqslant \tfrac{\|v\|_\infty}{N^2}\left(\sum_{k=1}^{N-1}1^2\right)^\frac{1}{2}\leqslant\tfrac{\|v\|_\infty}{N^{\frac{3}{2}}}.
\end{align*}
Taking notation and calculations from proof of Lemma \ref{lemma_D_is_coercive} and using Lemma \ref{lemma_inequalieties} we have 
\begin{align*}
M:=\frac{\|v\|_\infty+B}{1-A}\geqslant N^{\frac{3}{2}}\|x_N\|_{\mathbb{E}_N}\geqslant\|x_N\|_{\infty_N}.
\end{align*}
Since $N$ was taken arbitrary, we get assertion.
\end{proof}
In order to prove the main result we will use Lemmas 9.2 and 9.3 from \cite{kellypeterson}.
\begin{proof}[Proof of Theorem \ref{theorem_non-spurious_solutions}]
By Theorem \ref{theorem_solvability_of_discretization} we see that problem (\ref{discretization}) has exactly one solution. Moreover, problem (\ref{continuous_problem}) has exactly one solution by Theorem \ref{theorem_solvability_of_continuous}. By Lemma \ref{lemma_unifromly_bounded} and Lemma 9.3 from \cite{kellypeterson}, there exist positive constants $M$ and $Q$ such that the following estimations hold
\begin{align*}
|x_N(k)|\leqslant M,\hspace{2.5cm} N|\Delta x_N(k-1)|\leqslant Q
\end{align*}
for every $N\in\N$, $N\geqslant 2$, and every $k\in\N(1,N)$. Finally, using Lemma 9.2 from \cite{kellypeterson} we obtain that the discrete problem (\ref{discretization}) has a non-spurious solution in the sense described in the introduction.
\end{proof}

\end{document}